\newtheorem{thm}{Theorem}[section]
\newtheorem{cor}[thm]{Corollary}
\newtheorem{lem}[thm]{Lemma}
\newcommand{\om}{\Omega}
\def\C{\mathbb{C}}
\newcommand{\p}{\partial}
\def\d{\mathrm{d}}
\def\dt{\p_t}
\newcommand{\ds}{\displaystyle}
\newcommand{\f}{\frac}
\begin{document}
%\linenumbers

\title{Local modification of subdiffusion by initial Fickian diffusion: Multiscale modeling, analysis and computation}
\author{
Xiangcheng Zheng\thanks{School of Mathematics, Shandong University, Jinan 250100, China}
\and
Yiqun Li\thanks{Department of Mathematics, University of South Carolina, Columbia, SC 29208, USA}
\and
Wenlin Qiu\thanks{School of Mathematics, Shandong University, Jinan 250100, China}
}
\maketitle

\begin{abstract}
We propose a local modification of the standard subdiffusion model by introducing the initial Fickian diffusion, which results in a multiscale diffusion model. The developed model resolves the incompatibility between the nonlocal operators in subdiffusion and the local initial conditions and thus eliminates the initial singularity of the solutions of the subdiffusion, while retaining its heavy tail behavior away from the initial time. The well-posedness of the model and high-order regularity estimates of its solutions are analyzed by resolvent estimates, based on which the numerical discretization and analysis are performed. Numerical experiments are carried out to substantiate the theoretical findings.
\end{abstract}

\begin{keywords}
  multiscale diffusion, subdiffusion, variable exponent, well-posedness and regularity, error estimate
\end{keywords}

\begin{AMS}
35R11, 65M12
\end{AMS}

\pagestyle{myheadings}
\thispagestyle{plain}

\markboth{}{Local modification of subdiffusion}
\section{Introduction}
We consider the subdiffusion model  with the variable exponent \cite{FanHu,GarGiu,JinLiZouSINUM,LiLuo,LiLiZha,ZenZhaKar,ZhuLiu}, which has been used in various fields such as the transient dispersion in heterogeneous media \cite{SunZha} (more applications could be found in the review \cite{SunCha})
\begin{equation}\label{VtFDEs}\begin{array}{c}
\p_t u(\bm x,t)- \p_t^{\alpha(t)} \Delta u (\bm x,t)= f(\bm x,t) ,~~(\bm x,t) \in \Omega\times(0,T]; \\ [0.05in]
\ds u(\bm x,0)=u_0(\bm x),~\bm x\in \Omega; \quad u(\bm x,t) = 0,~(\bm x,t) \in \p \Omega\times[0,T].
\end{array}\end{equation}
Here $\Omega \subset \mathbb{R}^d$  is a simply-connected bounded domain with the piecewise smooth boundary $\p \om$ with convex corners, $\bm x := (x_1,\cdots,x_d)$ with $1 \le d \le 3$ denotes the spatial variables, $f$ and $u_0$ refer to the source term and the initial value, respectively, and $\p_t^{\alpha(t)}$ with the variable exponent $0\leq \alpha(t)<1$ is the Riemann-Liouville fractional differential operator defined by the symbol $*$ of convolution \cite{LorHar}
\begin{align}\label{var_RL}
&\ds \p_t^{\alpha(t)}u :=\p_t\, I_t^{1-\alpha(t)}u, \qquad I_t^{1-\alpha(t)}u : =\Big(\frac{t^{-\alpha(t)}}{\Gamma(1-\alpha(t))}\Big)* u(\bm x,t).
\end{align}
\subsection{Modeling issues}
In this work, we explore more benefits and properties of (\ref{VtFDEs}) via rigorous analysis.  A key ingredient lies in that we require $\alpha(0)=0$, that is, we expect the model (\ref{VtFDEs}) to degenerate to its integer-order analogue at the time instant $t=0$ in order to resolve the incompatibility between the nonlocal operators in subdiffusion and the local initial conditions, as mentioned in \cite{JMAA}. If such local modification works, the model (\ref{VtFDEs}) indeed exhibits multiple diffusion scales, i.e. the Fickian diffusion near the initial time and the subdiffusion away from the initial time.

 To give an intuitive motivation, we compare the solutions of the multiscale diffusion model (\ref{VtFDEs}) with its integer-order analogue, i.e. model (\ref{VtFDEs}) with $\alpha(t)\equiv 0$, which models the purely Fickian diffusion
$$\p_t u(\bm x,t)- \Delta u (\bm x,t)= f(\bm x,t)$$
 and its constant-exponent analogue, i.e. model (\ref{VtFDEs}) with $\alpha(t)\equiv \bar \alpha$ for some $0<\bar\alpha<1$, which accurately captures the power-law decaying property of the purely subdiffusion through heterogeneous porous media \cite{MeeSik,MetKla} and has attracted extensive research activities \cite{ChuEfeCMAME,DenLi,DenWan,JiaOu,LinTar,SakYam,StyOriGra}
\begin{equation}\label{tFDE}
\p_t u(\bm x,t)- \p_t^{\bar\alpha} \Delta u (\bm x,t)= f(\bm x,t).
\end{equation}
The solution curves of the above three models are presented in Figure \ref{fig1}, from which we observe that:
\begin{itemize}
\item In comparison with the solution curves of the purely Fickian diffusion and subdiffusion, the solutions to model (\ref{VtFDEs}) transit from the initial Fickian diffusion behavior to the long-term subdiffusion behavior, which indicates the multiscale feature of this model.

\item Compared with the initial rapid change (i.e. the initial singularity) of the solutions to the purely subdiffusion, the solutions to the multiscale diffusion model (\ref{VtFDEs}) exhibits the Fickian diffusion behavior near the initial time, which eliminates the incompatibility between the nonlocal operators in subdiffusion and the local initial conditions.

\item Compared with the exponential decay of the solutions to the purely Fickian diffusion away from the initial time, the solutions to the multiscale diffusion model (\ref{VtFDEs}) exhibits the subdiffusion behavior, which retains the advantages of the subdiffusion in modeling the power law decay and heavy tail behavior.

\end{itemize}

In summary, the multiscale diffusion model (\ref{VtFDEs}) provides a framework incorporating both Fickian and subdiffusion models and simultaneously retaining their features and advantages, which indicates the superiority of model (\ref{VtFDEs}) and motivates the current study on its mathematical and numerical analysis.

\begin{figure}[h!]
	\setlength{\abovecaptionskip}{0pt}
	\centering	
\includegraphics[width=3in,height=2in]{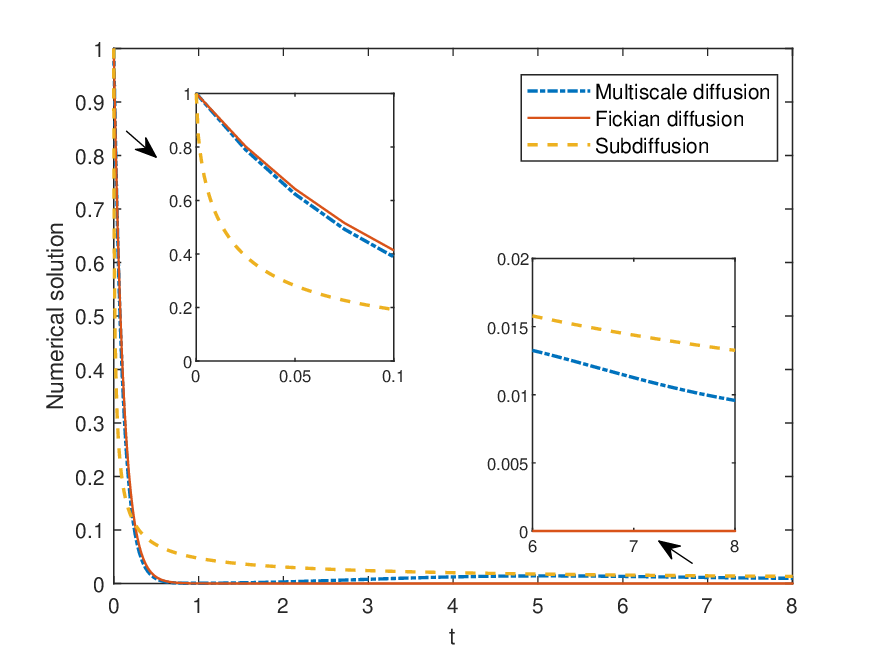}
	\caption{Plots of the solutions $u(0.5,t)$ for three models under $\Omega=(0,1)$, $T=8$, $f\equiv 0$, $u_0=\sin(\pi x)$. Here the variable exponent is chosen as $\alpha(t)=\alpha(T) + (\alpha(0) - \alpha(T))\big(1 - \frac{t}{T} -\frac{\sin(2\pi (1-t/T))}{2\pi}  \big)$,  a smooth and monotonic function on $[0,T]$ with end values $\alpha(0) = 0$ and $\alpha(T) = 0.4$. We accordingly fix $\bar \alpha=0.4$.  }
	\label{fig1}
\vspace{-0.15in}
\end{figure}

\subsection{Analysis issues}
There exist rare investigations for analysis of variable-exponent fractional partial differential equations in the literature. Recently, there are some progresses on the mathematical and numerical analysis of variable-exponent mobile-immobile time-fractional diffusion equations \cite{LiWanWan,LiWanZhe,JMAA,ZheWanIMA}, a different model from (\ref{VtFDEs})
\begin{equation}\label{mtFDE}
\p_t u(\bm x,t)+k\,\p_t^{\alpha(t)}u(\bm x,t)-  \Delta u (\bm x,t)= f(\bm x,t).
\end{equation}
Though different methods have been developed to study (\ref{mtFDE}), all derivations are based on a critical fact that the $\p_t u(\bm x,t)$ in (\ref{mtFDE}) dominates the $\p_t^{\alpha(t)}u(\bm x,t)$ such that the later could be considered as a low-order or perturbation term. In this case, the effects of variable-exponent fractional operators, which do not have nice properties as constant-exponent operators, are weakened. However, this phenomenon does not appear in (\ref{VtFDEs}) due to the composition of temporal and spatial operators, which brings essential differences and difficulties in comparison with (\ref{mtFDE}).

In this work, we derive an equivalent model as (\ref{VtFDEs}), which paves the way to perform analysis. Based on this more feasible formulation, we prove the well-posedness and regularity of the solutions, as well as developing and analyzing the fully-discrete finite element scheme. The rest of the paper is organized as follows: In \S \ref{S:Pre}, we introduce the notations and preliminary results to be used subsequently. In \S \ref{S:Well}, we prove the well-posedness of the multiscale diffusion model and the regularity estimates of its solutions. In \S \ref{S:Hreg}, we prove high-order regularity of the solutions via resolvent estimates. In \S \ref{S:Nume}, we derive a fully-discrete finite element scheme to the model and prove its error estimates. Numerical experiments are performed in \S \ref{S:Experiment} to substantiate the theoretical findings.

\section{Preliminaries} \label{S:Pre} We present preliminaries to be used subsequently.

\subsection{Notations}
Let $L^p(\om)$ with $1 \le p \le \infty$ be the Banach space of $p$th power Lebesgue integrable functions on $\om$. For a positive integer $m$,
let  $ W^{m, p}(\Omega)$ be the Sobolev space of $L^p$ functions with $m$th weakly derivatives in $L^p(\om)$ (similarly defined with $\om$ replaced by an interval $\mathcal I$). Let  $H^m(\Omega) := W^{m,2}(\Omega)$ and $H^m_0(\Omega)$ be its subspace with the zero boundary condition up to order $m-1$. For a non-integer $s\geq 0$, $H^s(\Omega)$ is defined via interpolation \cite{AdaFou}. Let $0 < \lambda_i \uparrow \infty$ be the eigenvalues and $\{\phi_i\}_{i=1}^\infty$ be the corresponding orthonormal eigenfunctions of the problem $-\Delta \phi_i = \lambda_i \phi_i$ with the zero boundary condition. We introduce the Sobolev space $\check{H}^s(\Omega)$ for $s\geq 0$ by
%\begin{equation*}
$$\ds \check{H}^{s}(\Omega) := \bigg \{ v \in L^2(\Omega): \| v \|_{\check{H}^s}^2 : = \sum_{i=1}^{\infty} \lambda_i^{s} (v,\phi_i)^2 < \infty \bigg \},$$
%\end{equation*}
which is a subspace of $H^s(\Omega)$ satisfying $\check{H}^0(\Omega) = L^2(\Omega)$ and $\check{H}^2(\Omega) = H^2(\Omega) \cap H^1_0(\Omega)$ \cite{Tho}.
For a Banach space $\mathcal{X}$, let $W^{m, p}(0,T; \mathcal{X})$ be the space of functions in $W^{m, p}(0,T)$ with respect to $\|\cdot\|_{\mathcal {X}}$.  All spaces are equipped with standard norms \cite{AdaFou,Eva}.
 %For a Banach space ${\cal X}$, define $W^{m,p}(\mathcal I;\mathcal X)$ by
%$$
%\ds W^{m,p}(0,T;\mathcal X)  := \Big \{ v:[0,T]\rightarrow\mathcal X : \big \| \p_t^k v(\cdot,t) \big \|_{\mathcal X}  \in L^p(0,T), ~~~ 0 \le k \le m, ~1 \le p \le \infty \Big \}
%$$
%equipped the norm
%$$\begin{array}{rl}
%\ds \| v\|_{W^{m,p}(0,T;\mathcal X)} &\hspace{-0.1in} \ds := \left \{\begin{array}{ll}
%\ds \bigg (\sum_{l = 0}^m \int_0^T \big \| \p_t^k v(\cdot,t) \big \|^p_{\mathcal X} dt  \bigg)^{1/p}, ~~& 1 \le p < \infty, \\[0.15in]
%\ds \max_{0 \le k \le m} \,\mbox{ess}\hspace{-0.025in}\sup_{\hspace{-0.2in} t \in (0,T)} \big \| \p_t^k v(\cdot,t) \big \|_{\mathcal X}, & p = \infty.
%\end{array} \right . \end{array}$$
%For instance, if $\mathcal X=L^p(\Omega)$, then the above definition becomes
%$$\begin{array}{rl}
%\ds \| v\|_{W^{m,p}(0,T;L^p(\Omega))} &\hspace{-0.1in} \ds := \left \{\begin{array}{ll}
%\ds \bigg (\sum_{l = 0}^m \int_0^T\int_\Omega \big | \p_t^k v(\cdot,t) \big |^pd\bm x dt  \bigg)^{1/p}, ~~& 1 \le p < \infty, \\[0.15in]
%\ds \max_{0 \le k \le m} \,\mbox{ess}\hspace{-0.025in}\sup_{\hspace{-0.2in} t \in (0,T)}\mbox{ess}\hspace{-0.025in}\sup_{\hspace{-0.2in} \bm x \in \Omega} \big | \p_t^k v(\bm x,t) \big |_{\mathcal X}, & p = \infty.
%\end{array} \right . \end{array}$$
%In particular, we have $L^{p}(\mathcal I;\mathcal X):=W^{0,p}(\mathcal I;\mathcal X)$.

We use $Q$, $Q_i$ and $Q_*$ to denote generic positive constants in which $Q$ may assume different values at different occurrences. We set $\|\cdot\|:=\|\cdot\|_{L^2(\Omega)}$ and $L^p(\mathcal X)$ for $L^p(0,T;\mathcal X)$ for brevity, and drop the notation $\om$ in the spaces and norms if no confusion occurs.

We make  the {\it Assumption A} throughout this paper:
\begin{itemize}
\item[(a)]  $0 \le \alpha( t) \le \alpha^* <1$ and $|\alpha^{\prime}(t)| $, $|\alpha^{\prime \prime}(t)| \le Q_*$ on $[0, T]$. In addition,    $\alpha( 0) = 0$.
\item[(b)] $f \in L^p(L^2)$ for $1< p <\infty$ and $ u_0 \in \check H^2$.
\end{itemize}

%\begin{lemma}\label{lem:Gronwall}(Generalized Gronwall inequality \cite{YeGao}) Let $a_0, a_1 > 0$ and $0 < \beta,\mu < 1$. If $0\leq q \in L^1(0,T)$ satisfies $q(t) \leq a_0t^{-\mu}+a_1q*t^{-\beta}$ for $ 0<t\leq T$ with $*$ referring to the convolution on $[0,t]$, then there exists a positive constant $Q=Q(a_1,\beta,T)$ such that $q(t)\leq Qa_0t^{-\mu}/(1-\mu)$ on $0<t\leq T$.
%\end{lemma}

\subsection{Solution representation and resolvent estimates}

For $\theta\in(\pi/2,\pi)$ and $\delta > 0$, let $\Gamma_\theta$ be the contour in the complex plane defined by
%%%%%%%%%%%%%
\begin{align*}%\label{GammaTheta}
\Gamma_\theta := \big \{z\in\C: |{\rm arg}(z)|=\theta, |z|\ge \delta \big \}
\cup \big \{z \in\C: |{\rm arg}(z)|\le \theta, |z|= \delta \big \}.
\end{align*}
%%%%%%%%%%%%%
The following inequalities hold for  $0<\mu\leq 1$ and $Q=Q(\theta,\mu)$ \cite{Akr,Lub}
\begin{equation}\label{GammaEstimate}
\int_{\Gamma_\theta} |z|^{\mu-1} |e^{tz}|  \, |d z| \le Q t^{-\mu},~~
\bigg \| \int_{\Gamma_\theta} z^\mu (z-\Delta)^{-1} e^{tz}
\, d z \bigg \|_{L^2\rightarrow L^2} \le \f{Q}{t^{\mu}}, ~~t\in (0,T].
\end{equation}
%where the norm $\|\cdot\|_{L^p\rightarrow L^p}$ of an operator $\mathcal S:L^p(\Omega)\rightarrow L^p(\Omega)$ is defined as
%$$\|\mathcal S\|_{L^p\rightarrow L^p}:=\sup_{0\neq q\in L^p}\frac{\|\mathcal Sq\|_{L^p}}{\|q\|_{L^p}}. $$

For any $q \in L^1_{loc}(\mathcal I)$, the Laplace transform $\mathcal L$ of its extension $\tilde q(t)$ to zero outside $\mathcal I$ and the corresponding inverse transform $\mathcal L^{-1}$ are denoted by
\begin{equation}\begin{array}{l}\label{Laplace}
\ds \mathcal{L}q(z):=\int_0^\infty \tilde q(t)e^{-tz}d t, \qquad \mathcal{L}^{-1}(\mathcal Lq(z)):=\frac{1}{2\pi \rm i}\int_{\Gamma_\theta} e^{tz}\mathcal Lq(z)d z=q(t),\\[0.15in]
\ds \mathcal L\big ({}^R\partial_t^\gamma q(t) \big ) = z^\gamma \mathcal L\big (q(t) \big),  \quad {}^R\partial_t^\gamma q :=  \p_t \,I_t^{1-\gamma} q, \qquad 0 \leq \gamma<1.
\end{array}
\end{equation}
%The Riemann-Liouville fractional differential operator  ${}^R\partial_t^\gamma q :=  \p_t \,I_t^{1-\gamma} q$ satisfies \cite{Pod}
%%%%%%%%%%%%%
%\begin{align}\label{LaplaceFrac}
%\mathcal L\big ({}^R\partial_t^\gamma q(t) \big ) = z^\gamma \mathcal L\big (q(t) \big), \qquad 0 \leq \gamma<1.
%\end{align}

Let $E(t):=e^{t\Delta}$ be the semigroup generated by the Dirichlet Laplacian. The solution $u$ to the heat equation
\begin{equation}\label{HeatPDE}\begin{array}{c}
\ds \p_t u(\bm x,t) - \Delta u(\bm x,t)  = f(\bm x,t),~~(\bm x,t)\in\Omega\times(0,T], \\[0.05in]
u(\bm x,t)=0, \quad (\bm x,t)\in \partial\Omega\times(0,T], ~~u(\bm x,0)=0,~~\bm x\in\Omega
\end{array}\end{equation}
can be expressed in terms of the $E(t)$ via the Duhamel's principle
\begin{equation}\label{HeatSoln}
u(\bm x,t) = \int_0^{t} E(t-\theta) f(\bm x,\theta) d\theta,
\end{equation}
where $E(t)$ has the spectral decomposition and expression in terms of the inverse Laplace transform
\begin{equation}\label{Et:Express}
E(t)\psi(\bm x) =\sum_{i=1}^\infty e^{-\lambda_i t}(\psi,\phi_i)\phi_i(\bm x)= \frac{1}{2\pi {\rm i}}\int_{\Gamma_\theta}e^{zt}  (z-\Delta)^{-1}\psi(\bm x) \, d z, ~~\psi \in L^2(\om).
\end{equation}
The following estimates hold for $s\geq r\geq -1$ and for any $t>0$ \cite{Akr,Lub,Tho}
%%%%%%%%%%%%%
\begin{equation}\begin{array}{l}\label{E:est}
%\|E(t)\|_{L^p \to L^p}+ t \|E'(t)\|_{L^p \to L^p} + t \|\Delta E(t)\|_{L^p \to L^p} \le Q.
%\\[0.1in]
\hspace{-0.1in} \ds \|E(t)\|_{L^2 \to L^2}\le Q,\quad \ds \|E(t) \psi \|_{\check{H}^s}\leq Qt^{-(s-r)/2} \| \psi \|_{\check{H}^r}, \qquad \psi \in \check{H}^r.
\end{array}
\end{equation}
%%%%%%%%%%%%%
%where $E'(t):=e^{t\Delta}\Delta$.

\begin{lem}\label{lem:Lp} \cite{Akr}
If $f\in L^p(L^2)$ for $1<p<\infty$, problem (\ref{HeatPDE}) has a unique solution $u\in W^{1,p}(L^2)\cap L^p(\check{H}^2)$ given by (\ref{HeatSoln}) such that
\begin{equation*}\label{HeatPDEestimate}
\|u\|_{W^{1,p}(0,t;L^2)} + \|u\|_{L^p(0,t;\check{H}^2)} \le Q \| f\|_{L^p(0,t;L^2)}, ~~0<t\leq T,
\end{equation*}
where $Q$ is independent of $f$, $t$ or $T$.
\end{lem}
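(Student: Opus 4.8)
The statement is the maximal $L^p$-regularity property of the Dirichlet Laplacian, and the plan is to reduce it to a vector-valued Fourier multiplier estimate on the line. Write $A:=-\Delta$ with domain $\check H^2(\om)$; from the spectral data $\{\lambda_i,\phi_i\}$, $A$ is a positive self-adjoint operator on $L^2(\om)$ with spectrum in $[\lambda_1,\infty)$, $\lambda_1>0$, and $E(t)=e^{-tA}$, cf. (\ref{Et:Express}), is the bounded analytic contraction semigroup it generates. Formula (\ref{HeatSoln}) is the mild solution; I would first verify, for $f$ smooth and compactly supported in $(0,T)$, that it is a genuine strong solution in $W^{1,p}(L^2)\cap L^p(\check H^2)$, and then recover the general case and uniqueness from the a priori estimate below (two solutions differ by one with vanishing data, hence coincide once the estimate is established with $f\equiv 0$).

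For the a priori estimate, extend $f$ by zero to $(0,\infty)$ and set $v(t):=\int_0^t AE(t-\theta)f(\theta)\,\d\theta=-\Delta u(t)$. It suffices to prove
\begin{equation*}
\|v\|_{L^p(0,\infty;L^2)}\le Q(p)\,\|f\|_{L^p(0,\infty;L^2)}
\end{equation*}
with $Q(p)$ depending only on $p$: then $u=A^{-1}v\in L^p(\check H^2)$ and $\p_t u=f-v\in L^p(L^2)$, with bounds of the same form, while the Volterra (causal) structure of (\ref{HeatSoln}) ensures that the restriction to $(0,t)$ depends only on $f|_{(0,t)}$, so $Q$ is independent of $t$ and $T$. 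Laplace/Fourier transforming in time, cf. (\ref{Laplace}), identifies $f\mapsto v$ with the operator-valued Fourier multiplier of symbol $m(\xi):=A(\mathrm i\xi I+A)^{-1}$, $\xi\in\IR$, and by the spectral theorem for $A$,
\begin{equation*}
\|m(\xi)\|_{L^2\to L^2}=\sup_{\lambda\ge\lambda_1}\Big|\f{\lambda}{\mathrm i\xi+\lambda}\Big|\le 1,\qquad
\|\xi\,m'(\xi)\|_{L^2\to L^2}=\sup_{\lambda\ge\lambda_1}\Big|\f{\xi\lambda}{(\mathrm i\xi+\lambda)^2}\Big|\le 1.
\end{equation*}

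For $p=2$ the bound on $v$ is immediate from Plancherel's identity in $L^2(\IR;L^2(\om))$. For general $1<p<\infty$ I would invoke the operator-valued Fourier multiplier theorem: $L^2(\om)$ is a Hilbert space, hence a UMD space in which $R$-boundedness of a family of operators is equivalent to uniform norm boundedness, so the two Mikhlin-type bounds above verify its hypotheses and yield the claim; equivalently, one may quote de Simon's theorem that a generator of a bounded analytic semigroup on a Hilbert space has maximal $L^p$-regularity for every $1<p<\infty$. Assembling the three pieces gives the estimate of the lemma.

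The main obstacle is precisely the passage to $p\ne 2$: one must either bring in this operator-valued multiplier/maximal-regularity machinery, or, for a more hands-on argument, work modewise — expand $f=\sum_i f_i\phi_i$, solve $u_i'+\lambda_i u_i=f_i$ with $u_i(0)=0$, observe $\|\lambda_i u_i\|_{L^p(0,t)}\le\|f_i\|_{L^p(0,t)}$ by Young's inequality since the kernel $\lambda_i e^{-\lambda_i\cdot}$ has unit $L^1$-norm, and then confront the genuinely nontrivial step of resumming the modes inside the $L^p(0,t)$-norm of $\check H^2$, i.e. a square-function inequality, which reduces once more to the multiplier estimate. The only remaining bookkeeping is to keep $Q$ free of $t$ and $T$, which is handled uniformly by the extension-by-zero and the causality of the Duhamel operator.
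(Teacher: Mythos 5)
The paper offers no proof of this lemma: it is quoted from \cite{Akr} as a known maximal $L^p$-regularity result for the Dirichlet Laplacian, so there is no internal argument to compare against, and what you have written is, in substance, the standard proof of the cited result. Your reduction is sound: extend $f$ by zero, use the causal (Volterra) structure of \eqref{HeatSoln} so that the estimate on $(0,\infty)$ restricts to $(0,t)$ with a constant free of $t$ and $T$, identify $f\mapsto Au=-\Delta u$ with the operator-valued multiplier $m(\xi)=A(\mathrm i\xi+A)^{-1}$, check $\sup_\xi\|m(\xi)\|_{L^2\to L^2}\le 1$ and $\sup_\xi\|\xi m'(\xi)\|_{L^2\to L^2}\le 1$ by the spectral theorem, and conclude by Plancherel for $p=2$ and by the operator-valued Mikhlin/Weis theorem (equivalently de Simon's theorem, since on the Hilbert space $L^2(\Omega)$ $R$-boundedness reduces to uniform boundedness) for general $1<p<\infty$; then $\partial_t u=f-Au$ and $u=A^{-1}(Au)$ give the full $W^{1,p}(L^2)\cap L^p(\check H^2)$ bound. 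Two points you leave implicit are routine but worth stating: the $\|u\|_{L^p(L^2)}$ part of the $W^{1,p}$ norm uses $\lambda_1>0$ (boundedness of $A^{-1}$, i.e. Poincar\'e), and the uniqueness claim requires the standard observation that any solution in $W^{1,p}(L^2)\cap L^p(\check H^2)$ of the homogeneous problem with zero data vanishes (an energy argument, or the identity showing every strong solution is the mild solution), which is what entitles you to apply the a priori estimate to a difference of solutions. Compared with the paper's citation, your route buys self-containedness at the cost of invoking the multiplier/maximal-regularity machinery, which in this Hilbert-space setting is exactly the accepted content behind the reference.
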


We finally apply \eqref{Lop:e1} to introduce the following relations which will be used frequently in subsequent analysis \cite{Pod}
\begin{equation}\label{Model:e5}\begin{array}{l}
%\ds (t-s)^{-\alpha( s)} = (t-s)^{-\alpha^*} (t-s)^{\alpha^*-\alpha( s)}\le \max\{1, T\} (t-s)^{-\alpha^*} ,\\[0.1in]
\ds (t-s)^{-\alpha( t-s)} =  e^{-\alpha( t-s)\ln(t-s)}\le Q,\\[0.075in]
\ds \int_{0}^{T} e^{-\gamma t} t^{-\mu} d t \leq \gamma^{\mu-1} \int_{0}^{\infty} e^{-s} s^{-\mu} ds =\gamma^{\mu-1} \Gamma(1-\mu),\quad \mu < 1.
%\ds \mathcal{L}\big[^{R}\p_t^{\gamma} g(t) \big] = z^{\gamma} \mathcal{L}\big [ g(t) \big], ~~\mathrm{for}~{}^{R}\p_t^{\gamma} g(t) = \f{\p_t}{\Gamma(1-\gamma)}\int_0^t \f{g(s)ds}{(t-s)^{\gamma}},~~0 < \gamma <1.
\end{array}\end{equation}

\section{Well-posedness}\label{S:Well}
\subsection{Model reformulation}
We give an equivalent but more feasible formulation of the multiscale diffusion model (\ref{VtFDEs}) to facilitate the analysis.
\begin{thm}\label{Model}
  If $u$ solves the multiscale diffusion model \eqref{VtFDEs}, then $u$ solves the following system
  \begin{equation}\label{VtFDEs1}\begin{array}{c}
\ds \p_t u(\bm x,t)-  \Delta u (\bm x,t)= f(\bm x,t)+B(t) u(\bm x,t) ,~~(\bm x,t) \in \Omega\times(0,T]; \\ [0.05in]
\ds u(\bm x,0)=u_0(\bm x),~\bm x\in \Omega; \quad u(\bm x,t) = 0,~(\bm x,t) \in \p \Omega\times[0,T]
\end{array}\end{equation}
where
$$ B(t) u := g(t)* \Delta u\text{ with }g( t) := \p_t\Big(\frac{t^{-\alpha(t)}}{\Gamma(1-\alpha(t))}\Big).$$ %where we still denote the unknown solution by $u$ for the sake of simplicity.
Conversely, if $u$ solves \eqref{VtFDEs1}, then $u$ solves the multiscale diffusion model  \eqref{VtFDEs}.
\end{thm}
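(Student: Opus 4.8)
The equivalence reduces to a single kernel identity. Put $k(t):=t^{-\alpha(t)}/\Gamma(1-\alpha(t))$, so that $I_t^{1-\alpha(t)}v=k*v$ and, in the notation of the statement, $g=k'$; note that in \eqref{VtFDEs} the term $\p_t^{\alpha(t)}\Delta u$ means $\p_t(k*\Delta u)$. The plan is: (i) show $k$ is bounded and absolutely continuous on $[0,T]$ with $k(0)=1$; (ii) deduce the Leibniz-type identity $\p_t(k*v)=v+g*v$; (iii) put $v=\Delta u$ and substitute into \eqref{VtFDEs} to obtain \eqref{VtFDEs1}, and reverse the algebra for the converse. The initial and boundary data coincide verbatim in the two formulations, so only the interior equation requires attention.

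For (i), write $k(t)=e^{-\alpha(t)\ln t}/\Gamma(1-\alpha(t))$. By Assumption A, $\alpha(0)=0$ and $|\alpha'|\le Q_*$, so $|\alpha(t)|\le Q_* t$ and hence $|\alpha(t)\ln t|\le Q_* t|\ln t|\to0$ as $t\to0^+$; since also $\Gamma(1-\alpha(t))\to\Gamma(1)=1$, this yields $k(0^+)=1$, and boundedness of $k$ on $(0,T]$ follows from \eqref{Model:e5}. Differentiating,
\begin{equation*}
g(t)=\frac{e^{-\alpha(t)\ln t}}{\Gamma(1-\alpha(t))}\Big(-\alpha'(t)\ln t-\frac{\alpha(t)}{t}+\frac{\Gamma'(1-\alpha(t))}{\Gamma(1-\alpha(t))}\,\alpha'(t)\Big),
\end{equation*}
and since $|\alpha(t)/t|\le Q_*$, $|\alpha'|\le Q_*$, and $\Gamma,\Gamma',1/\Gamma$ are bounded on the compact range $[1-\alpha^*,1]$, we get $|g(t)|\le Q(1+|\ln t|)$ on $(0,T]$. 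As $\int_0^T(1+|\ln t|)\,dt<\infty$, we conclude $g\in L^1(0,T)$ and thus $k\in W^{1,1}(0,T)$. This is precisely where the modeling hypothesis $\alpha(0)=0$ enters: it is what makes $k$ bounded and absolutely continuous, as opposed to the genuinely singular kernel $t^{-\bar\alpha}$ behind \eqref{tFDE}.

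For (ii) and (iii), take $v:=\Delta u\in L^1(0,T;L^2)$, which holds for a solution $u$ of \eqref{VtFDEs} in the natural class (e.g. $u\in W^{1,p}(L^2)\cap L^p(\check H^2)$, cf. Lemma \ref{lem:Lp}). Since $k\in W^{1,1}(0,T)$ with $k(0)=1$ and $k'=g$, the rule for differentiating a convolution gives $\p_t(k*v)=k(0)v+g*v=v+g*v$ in $L^2(\Omega)$ for a.e. $t\in(0,T]$, i.e. $\p_t^{\alpha(t)}\Delta u=\Delta u+B(t)u$. Substituting this into \eqref{VtFDEs} produces \eqref{VtFDEs1}. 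Conversely, if $u$ solves \eqref{VtFDEs1}, then rewriting $\p_t u-\Delta u-B(t)u=f$ as $\p_t u-\big(k(0)\Delta u+g*\Delta u\big)=f$ and using $k(0)\Delta u+g*\Delta u=\p_t(k*\Delta u)=\p_t^{\alpha(t)}\Delta u$ recovers \eqref{VtFDEs}.

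The main difficulty is making step (ii) rigorous: $g$ carries a logarithmic singularity at $t=0$, so the elementary Leibniz rule does not apply directly, and one justifies $\p_t(k*v)=k(0)v+g*v$ either through the $W^{1,1}$-regularity of $k$ combined with Young's convolution inequality, or by approximating $\alpha$ near the origin by exponents bounded away from $0$ and passing to the limit. A secondary, bookkeeping issue is to pin down the solution notion so that $\Delta u(\cdot,t)$ is defined for a.e. $t$ and the identity holds in $L^2(\Omega)$ there; this causes no trouble because the well-posedness results of \S\ref{S:Well}--\S\ref{S:Hreg} will be established for exactly that class.
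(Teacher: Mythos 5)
Your proposal is correct and follows essentially the same route as the paper: the paper's proof rewrites the kernel $k(t-s)$ via the fundamental theorem of calculus, uses $\alpha(0)=0$ (through the limit $\alpha(t)\ln t\to 0$) to identify the boundary value $k(0^+)=1$, and then interchanges the order of integration and differentiates, which is exactly the $W^{1,1}$-convolution Leibniz identity $\p_t(k*\Delta u)=\Delta u+g*\Delta u$ that you invoke. The only differences are presentational: you verify $g\in L^1(0,T)$ explicitly (the paper defers this bound to Lemma \ref{Sol:Lem0}) and replace the paper's l'Hospital computation by the direct Lipschitz bound $|\alpha(t)\ln t|\le Q_*t|\ln t|$, which is a harmless simplification.
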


\begin{proof}
We first prove the problem \eqref{VtFDEs} could be reformulated to \eqref{VtFDEs1}.
We rewrite the kernel in \eqref{var_RL}  as
\begin{equation}\label{IntKer}
  \ds \f{(t-s)^{-\alpha( t-s)}}{\Gamma(1-\alpha( t-s))} =\int_s^t \p_y \Big(\f{(y-s)^{-\alpha( y-s)}}{\Gamma(1-\alpha( y-s))}\Big) dy + \f{(y-s)^{-\alpha( y-s)}}{\Gamma(1-\alpha( y-s))} \bigg|_{y=s},
\end{equation}
where the last term could be evaluated by {\it Assumption A} (cf. \S 2) and l'Hospital's rule as follows
\begin{equation}\begin{array}{l}\label{Lop:e1}
\ds \lim_{t\rightarrow 0^+}\alpha(t)\ln(t)= \lim_{t\rightarrow 0^+} \f{\alpha(t)}{({1}/{\ln(t)})} =\lim_{t\rightarrow 0^+} - t \ln^2(t)  \alpha^\prime(t) =0,\\[0.15in]
  \ds \lim_{y\rightarrow s^+}\f{(y-s)^{-\alpha( y-s)}}{\Gamma(1-\alpha( y-s))}=\lim_{y\rightarrow s^+} \f{e^{-\alpha(y-s)\ln(y-s)}}{\Gamma(1-\alpha( y-s))}=1.
  \end{array}
\end{equation}
We then incorporate  \eqref{IntKer}--\eqref{Lop:e1} into \eqref{var_RL} to reformulate $\p_t^{\alpha(t)} \Delta u$ in \eqref{VtFDEs}  and  interchange the order of integration to obtain
\begin{equation}\begin{array}{cl}\label{Leib:e1}
 \ds \p_t^{\alpha(t)}\Delta u & \hspace{-0.1in} \ds = \p_t \int_0^t \f{(t-s)^{-\alpha( t-s)}}{\Gamma(1-\alpha( t-s))} \Delta u(\bm x,s) ds\\ [0.15in]
  &\hspace{-0.1in} \ds =\p_t  \int_0^t \int_s^t \p_y \Big(\f{(y-s)^{-\alpha( y-s)}}{\Gamma(1-\alpha( y-s))}\Big) \Delta u(\bm x,s) dyds + \p_t  \int_0^t  \Delta u(\bm x,s) ds\\[0.15in]
 \ds &\hspace{-0.1in} \ds =\p_t \int_0^t \int_0^y \p_y \Big(\f{(y-s)^{-\alpha( y-s)}}{\Gamma(1-\alpha( y-s))}\Big) \Delta u(\bm x,s) ds dy   +  \Delta u(\bm x,t) \\[0.15in]
 \ds &\hspace{-0.1in} \ds =\int_0^t \p_t \Big(\f{(t-s)^{-\alpha( t-s)}}{\Gamma(1-\alpha( t-s))}\Big) \Delta u(\bm x,s) ds   +  \Delta u(\bm x,t) \\[0.15in]
  \ds &\hspace{-0.1in} \ds =g(t)* \Delta u(\bm x,t) + \Delta u(\bm x,t),
\end{array}
\end{equation}
by which the multiscale diffusion model  \eqref{VtFDEs} could be rewritten as \eqref{VtFDEs1}, and thus we prove the first statement of this theorem.
Conversely, suppose $u$ is the solution to \eqref{VtFDEs1}, it also solves \eqref{VtFDEs} by the equality  \eqref{Leib:e1}. We thus complete the proof of the theorem.
\end{proof}

\subsection{Auxiliary estimates}
We introduce the following lemmas for future use.

\begin{lem}\label{Sol:Lem0}
Suppose Assumption A holds. Then there exists a positive constant $Q = Q(\alpha^*, Q_*,T)$ such that the following estimates hold:

Case 1: If $\alpha^\prime( 0)\neq 0$, we have
\begin{equation}\label{Lem0:est:e0}
\ds \big| g( t) \big| \le Q |\ln(t)|,\quad  \big|  g^\prime( t) \big| \le Q t^{-1}.
\end{equation}

Case 2: If $\alpha^\prime( 0) = 0$ and $ \alpha^{\prime \prime}( 0) \neq 0$,  we have
\begin{equation}\label{Lem0:est:e1}
\ds \big| g( t) \big| \le Q ,\quad  \big| g^\prime( t) \big| \le Q  |\ln(t)|.
\end{equation}

Case 3: If $  \alpha^{\prime }( 0) = \alpha^{\prime \prime}( 0)= 0$ and $\lim_{t \rightarrow 0^+}  \alpha^{\prime \prime}(t) \ln(t)$ exists,  we have
\begin{equation}\label{Lem0:est:e2}
\ds \big| g(t) \big| \le Q ,\quad  \big| g^\prime( t) \big| \le Q .
\end{equation}

\end{lem}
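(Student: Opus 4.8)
The plan is to write the kernel in exponential form, namely
\begin{equation*}
h(t):=\frac{t^{-\alpha(t)}}{\Gamma(1-\alpha(t))}=\frac{e^{-\beta(t)}}{\Gamma(1-\alpha(t))},\qquad \beta(t):=\alpha(t)\ln(t),
\end{equation*}
so that $g=h'$ and $g'=h''$, and to reduce all three cases to bounds for $\beta'$, $\beta''$ and the first two derivatives of $\alpha$ as $t\to0^+$. Two facts hold uniformly and will be established first. (i) $e^{-\beta(t)}=t^{-\alpha(t)}$ is bounded on $(0,T]$: by Assumption A, $\alpha(0)=0$ and $|\alpha'|\le Q_*$ give $0\le\alpha(t)\le Q_* t$, whence $\alpha(t)|\ln(t)|\le Q_*\sup_{(0,T]}t|\ln(t)|\le Q$. (ii) Since $1-\alpha(t)\in[1-\alpha^*,1]$, the numbers $\Gamma(1-\alpha(t))$, $\Gamma'(1-\alpha(t))$, $\Gamma''(1-\alpha(t))$ are bounded and $\Gamma(1-\alpha(t))$ is bounded below by a positive constant, so $1/\Gamma(1-\alpha(t))$ and its first two $t$-derivatives (which involve only these $\Gamma$-values together with the bounded $\alpha',\alpha''$) are bounded. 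Differentiating $h$ and using (i)--(ii),
\begin{equation*}
g(t)=e^{-\beta(t)}\Big[\frac{-\beta'(t)}{\Gamma(1-\alpha(t))}+\frac{\alpha'(t)\,\Gamma'(1-\alpha(t))}{\Gamma(1-\alpha(t))^2}\Big],
\end{equation*}
and a longer but structurally identical formula for $g'$, so that $|g(t)|\le Q\big(1+|\beta'(t)|\big)$ and $|g'(t)|\le Q\big(1+|\beta'(t)|^2+|\beta''(t)|\big)$ on $(0,T]$, where $\beta'(t)=\alpha'(t)\ln(t)+\alpha(t)/t$ and $\beta''(t)=\alpha''(t)\ln(t)+2\alpha'(t)/t-\alpha(t)/t^2$.

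It then remains to estimate the elementary quantities $\alpha'(t)\ln(t)$, $\alpha(t)/t$, $\alpha'(t)/t$, $\alpha(t)/t^2$, $\alpha''(t)\ln(t)$ as $t\to0^+$ in each regime, and this is where the case split enters. In Case~1 ($\alpha'(0)\ne0$), the definition of the derivative and Taylor's theorem give $\alpha(t)/t\to\alpha'(0)$ while $\alpha(t)/t^2$ and $\alpha'(t)/t$ are $O(t^{-1})$, and $\alpha'(t)\ln(t)$, $\alpha''(t)\ln(t)$ are $O(|\ln(t)|)$; hence $|\beta'(t)|=O(1+|\ln(t)|)$ and $|\beta''(t)|+|\beta'(t)|^2=O(t^{-1})$, which is \eqref{Lem0:est:e0}. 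In Case~2 ($\alpha'(0)=0$, $\alpha''(0)\ne0$) one has $\alpha'(t)=O(t)$ and $\alpha(t)=O(t^2)$, so $\alpha'(t)\ln(t)=O(t|\ln(t)|)=O(1)$, $\alpha(t)/t=O(1)$, $\alpha'(t)/t=O(1)$, $\alpha(t)/t^2=O(1)$, whereas $\alpha''(t)\ln(t)=O(|\ln(t)|)$; thus $|\beta'(t)|=O(1)$ and $|\beta''(t)|=O(|\ln(t)|)$, i.e. \eqref{Lem0:est:e1}. In Case~3 ($\alpha'(0)=\alpha''(0)=0$, $\alpha''(t)\ln(t)$ convergent) I would use $\alpha'(t)/t\to\alpha''(0)=0$ and $\alpha(t)/t^2\to\tfrac12\alpha''(0)=0$ (definition of the derivative and l'Hospital's rule, since $\alpha(0)=\alpha'(0)=0$), hence $\alpha(t)/t=(\alpha(t)/t^2)\,t\to0$ and $\alpha'(t)\ln(t)=(\alpha'(t)/t)\,t\ln(t)\to0$, which together with the standing hypothesis $\alpha''(t)\ln(t)=O(1)$ makes both $\beta'$ and $\beta''$ bounded, giving \eqref{Lem0:est:e2}.

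Finally, away from the origin $g$ and $g'$ are continuous, and the rates $t^{-1}$ (and the constant rate $1$) are bounded below by a positive constant on all of $(0,T]$, so the near-origin estimates upgrade to estimates on $(0,T]$ with a constant depending only on $\alpha^*$, $Q_*$, $T$ (the bounds stated with $|\ln(t)|$ being understood as asserting at most a logarithmic singularity at $t=0$, with ordinary boundedness elsewhere). The main obstacle is not conceptual but a matter of bookkeeping: one must carry out the full differentiation of $g'=h''$ and verify that every resulting term is dominated by the claimed singular rate, and, especially in Case~3, supply the correct $t\to0^+$ asymptotics of $\alpha'(t)/t$, $\alpha(t)/t^2$ and $\alpha'(t)\ln(t)$ from the sole limit hypothesis on $\alpha''(t)\ln(t)$ rather than from a Taylor expansion; the l'Hospital arguments already used in \eqref{Lop:e1} do the real work there.
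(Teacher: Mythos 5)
Your proposal is correct and follows essentially the same route as the paper: writing $g=h'$ with $h(t)=t^{-\alpha(t)}/\Gamma(1-\alpha(t))$, using the uniform bounds on $t^{-\alpha(t)}$ and on the Gamma factors, and reducing everything to the elementary asymptotics of $\alpha'(t)\ln(t)$, $\alpha(t)/t$, $\alpha'(t)/t$, $\alpha(t)/t^2$, $\alpha''(t)\ln(t)$ via Taylor/l'Hospital in each case — your $\beta'=\alpha'\ln t+\alpha/t$ bookkeeping is just the paper's factorization $g=hG$, $g'=h(G^2+G')$ in slightly different notation. The only cosmetic caveat (that the stated $|\ln(t)|$ bounds should be read as logarithmic blow-up at $t=0$ plus boundedness elsewhere) is shared by the paper's own argument, so it is not a gap.
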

\begin{proof}
We  first prove Case 1. For the convenience of analysis, we split $g( t)$  as
\begin{equation}\begin{array}{l}\label{Lem0:e1}
\ds g(t) = \f{t^{-\alpha( t)}}{\Gamma(1-\alpha( t))} G( t),\quad
\ds G( t)= - \alpha^\prime( t) \ln(t) -\f{\alpha( t)}{t} + \f{\Gamma^\prime(1-\alpha( t))\alpha^\prime( t)}{\Gamma(1-\alpha( t))}
\end{array}
\end{equation}
with $\big|\f{t^{-\alpha( t)}}{\Gamma(1-\alpha( t))}  \big| \le Q $ by the first estimate in \eqref{Model:e5}. The second term in $G( t)$ could be bounded as follows
\begin{equation}\begin{array}{l}\label{Lem0:e2}
\hspace{-0.15in}\ds \f{\alpha( t)}{t} = \f{\alpha( t)-\alpha( 0)}{t} = \alpha^\prime( 0) + \f{1}{t} \int_0^t  \alpha^{\prime \prime}( y) (t-y) dy,\quad
\ds \bigg|\f{\alpha( t)}{t}\bigg|   \le  Q Q_*,
\end{array}
\end{equation}
which, together with \eqref{Lem0:e1}, yields $|G( t) | \le Q|\ln(t)|$ and thus gives the first estimate in \eqref{Lem0:est:e0}.
In addition, we employ the  expressions in \eqref{Lem0:e1} to evaluate $ g^\prime( t)$ as
\begin{equation}\begin{array}{l}\label{Lem0:e3}
 \hspace{-1in} \ds  g^\prime( t)  \ds = \f{t^{-\alpha( t)}}{\Gamma(1-\alpha( t))} \Big(G^2( t) +  G^\prime(t)\Big),\\[0.1in]
 \hspace{-1in} \ds  G^\prime(t) \ds =  - \alpha^{\prime\prime}( t) \ln(t) - \f{\alpha^{\prime}( t)}{t} -  \Big(\f{\alpha( t)}{t} \Big)^{\prime} \ds     +   \Big(\f{\Gamma^\prime(1-\alpha( t))\alpha^\prime( t)}{\Gamma(1-\alpha( t))}\Big)^\prime.
  \end{array}
\end{equation}
We incorporate the estimate
$$\ds \bigg|\Big(\f{\alpha( t)}{t} \Big)^\prime \bigg| \le \bigg| \f{1}{t}\int_0^t  \alpha^{\prime \prime}( y) dy\bigg| + \bigg| \f{1}{t^2}\int_0^t  \alpha^{\prime \prime}( y) (t-y) dy\bigg| \le Q Q_*$$
 with \eqref{Model:e5} and  $|G( t) | \le Q |\ln(t)|$ to bound $g^\prime( t)$ in \eqref{Lem0:e3} by
\begin{equation}\begin{array}{l}\label{Ptg}
  \ds |g^\prime( t)| \le Q \big(\big|G^2( t)\big | + \big|G^\prime(t)\big|\big) \le Q Q_*\big(\ln^2(t)+| \ln(t)| +{t}^{-1} \big)  \le Q t^{-1}
  \end{array}
\end{equation}
and thus we arrive at the second estimate in \eqref{Lem0:est:e0}.

Now we prove Case 2. We employ  l'Hospital's rule to evaluate
\begin{equation}\begin{array}{l}\label{Lem0:lop1}
\hspace{-0.15in}\ds \lim_{t\rightarrow 0^+} \alpha^\prime(t)\ln(t)\!=\! \lim_{t\rightarrow 0^+} \f{ \alpha^\prime(t)}{({1}/{\ln(t)})} \!=\!\lim_{t\rightarrow 0^+} - t \ln^2(t)  \alpha^{\prime \prime}(t) \!=\!0,\,\,
\ds \ds \lim_{t\rightarrow 0^+} \f{  \alpha^\prime( t)}{t} \! =\!   \alpha^{\prime \prime}( 0),
\end{array}
\end{equation}
and thus we combine the first estimate in  \eqref{Lem0:lop1}  with \eqref{Lem0:e1}--\eqref{Lem0:e2} to bound $|G( t)| \le Q$ and thus to obtain the first estimate in \eqref{Lem0:est:e1}.  We then combine the second estimate in \eqref{Lem0:lop1}
to find that the most singular term in  $G ^\prime(t)$ in \eqref{Lem0:e3} now becomes $ \alpha^{\prime \prime}( t) \ln(t)$, and thus we derive the second estimate in   \eqref{Lem0:est:e1}.

For Case 3, we combine \eqref{Lem0:e1}, \eqref{Lem0:e3} with estimates in \eqref{Lem0:lop1} to derive the estimates in \eqref{Lem0:est:e2} and thus we complete the proof of the lemma.
\end{proof}

For $\gamma \ge 1$ and $1 < p < \infty$, let $\mathcal X_{\gamma}$ be the space $\mathcal X := \big \{g\in L^{p}(\check H^2): g(0)=0 \big \}$ with the equivalent norm $\|g\|_{\mathcal X_{\gamma}} := \big \|e^{-\gamma t}\Delta g \big \|_{L^p(L^2)}$ \cite{LiWanWan}. Then we prove an auxiliary estimate for the operator $B(t)$.

\begin{lem}\label{Sol:Lem1}
For $\gamma \ge 1$ and $1 < p < \infty$, there exists a positive constant $Q = Q(\alpha^*,T)$ such that
\begin{equation}\label{Lem1:e0}
\bigg \| e^{-\gamma t}\int_0^t   E^\prime(t-s) B(s) v ds \bigg \|_{L^p(L^2)} \le Q \gamma^{\alpha^*-1} \| v \|_{\mathcal{X}_\gamma}.
\end{equation}
\end{lem}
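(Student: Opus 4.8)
\emph{Proof strategy.} The plan is to recognize the quantity in \eqref{Lem1:e0} as $\Delta$ applied to the Duhamel solution of a heat equation with source $B(\cdot)v$, and then to combine the maximal $L^p$-regularity of Lemma~\ref{lem:Lp} with two applications of Young's convolution inequality and an $L^1$-bound on the weighted kernel $g$. Concretely, set $F(t):=B(t)v=(g*\Delta v)(t)$ and $y(t):=\int_0^t E(t-s)F(s)\,ds$. Since $v\in\mathcal X_\gamma$ forces $\Delta v\in L^p(0,T;L^2)$ and $g\in L^1(0,T)$ (indeed $|g(t)|\le Q(1+|\ln t|)$ by \eqref{Lem0:e1}, cf. Lemma~\ref{Sol:Lem0}), the convolution $F$ lies in $L^p(0,T;L^2)$, so Lemma~\ref{lem:Lp} gives $y\in W^{1,p}(L^2)\cap L^p(\check H^2)$ solving $\p_t y-\Delta y=F$, $y(0)=0$. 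Differentiating Duhamel's formula yields $\p_t y=F+\int_0^t E'(t-s)F(s)\,ds$, hence $\int_0^t E'(t-s)B(s)v\,ds=\p_t y-F=\Delta y$.

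Next I would pass to the exponentially weighted unknowns $\tilde y:=e^{-\gamma t}y$ and $\tilde F:=e^{-\gamma t}F$, which again lie in $W^{1,p}(L^2)\cap L^p(\check H^2)$ and, by the product rule, solve $\p_t\tilde y-\Delta\tilde y=\tilde F-\gamma\tilde y$ with $\tilde y(0)=0$. Applying Lemma~\ref{lem:Lp} to this equation and using $\|\Delta\tilde y\|_{L^p(L^2)}=\|\tilde y\|_{L^p(\check H^2)}$ gives $\|e^{-\gamma t}\int_0^t E'(t-s)B(s)v\,ds\|_{L^p(L^2)}=\|\Delta\tilde y\|_{L^p(L^2)}\le Q(\|\tilde F\|_{L^p(L^2)}+\gamma\|\tilde y\|_{L^p(L^2)})$. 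The term $\gamma\|\tilde y\|_{L^p(L^2)}$ is absorbed by writing $\tilde y(t)=\int_0^t e^{-\gamma(t-s)}E(t-s)\tilde F(s)\,ds$ and invoking Young's inequality together with $\int_0^T\|e^{-\gamma t}E(t)\|_{L^2\to L^2}\,dt\le Q\int_0^T e^{-\gamma t}\,dt\le Q\gamma^{-1}$ (from \eqref{E:est}), which yields $\gamma\|\tilde y\|_{L^p(L^2)}\le Q\|\tilde F\|_{L^p(L^2)}$. Thus the left-hand side of \eqref{Lem1:e0} is bounded by $Q\|e^{-\gamma t}B(\cdot)v\|_{L^p(L^2)}$.

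To close the estimate, I would use that $B(t)v=(g*\Delta v)(t)$ factors the weight, $e^{-\gamma t}B(t)v=\big((e^{-\gamma\cdot}g)*(e^{-\gamma\cdot}\Delta v)\big)(t)$, so a second application of Young's inequality gives $\|e^{-\gamma t}B(\cdot)v\|_{L^p(L^2)}\le\|e^{-\gamma t}g\|_{L^1(0,T)}\|v\|_{\mathcal X_\gamma}$. It then remains to bound $\|e^{-\gamma t}g\|_{L^1(0,T)}\le Q\gamma^{\alpha^*-1}$ for $\gamma\ge1$: when $\alpha^*>0$ one has $|g(t)|\le Q(1+|\ln t|)\le Q_{\alpha^*,T}\,t^{-\alpha^*}$ on $(0,T]$, and the second estimate in \eqref{Model:e5} gives $\int_0^T e^{-\gamma t}|g(t)|\,dt\le Q\gamma^{\alpha^*-1}\Gamma(1-\alpha^*)$ (the case $\alpha^*=0$ is trivial since then $g\equiv0$). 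Chaining the three bounds proves \eqref{Lem1:e0}.

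The step that I expect to need the most care is the identification $\int_0^t E'(t-s)B(s)v\,ds=\Delta y$ and the legitimacy of the damped equation for $\tilde y$: one must check that $y$, hence $\tilde y$, really lies in $W^{1,p}(L^2)\cap L^p(\check H^2)$, so that the differentiation of Duhamel's formula, the product rule $\p_t(e^{-\gamma t}y)=e^{-\gamma t}\p_t y-\gamma e^{-\gamma t}y$, and the second use of Lemma~\ref{lem:Lp} are all legitimate — which is exactly what invoking Lemma~\ref{lem:Lp} for the unweighted problem first provides. Everything else reduces to two Young inequalities, the semigroup bounds \eqref{E:est}, and the elementary inequality $1+|\ln t|\le Q_{\alpha^*,T}\,t^{-\alpha^*}$ on $(0,T]$.
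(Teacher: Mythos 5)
Your proof is correct, but it takes a genuinely different route from the paper. The paper works on the singular integral directly: it inserts an $\varepsilon$-regularization via the Laplace transform, writes $\int_0^t E'(t-s)B(s)v\,ds$ as a convolution of the kernel $\frac{1}{2\pi\mathrm i}\int_{\Gamma_\theta}e^{z(t-s)}z^{1-\varepsilon}(z-\Delta)^{-1}dz$ with ${}^R\partial_s^{\varepsilon}(B(s)v)$, bounds the kernel by the resolvent estimate \eqref{GammaEstimate}, and controls ${}^R\partial_t^{\varepsilon}(B(t)v)$ using both the $|g|$ and the $|g'|$ bounds of Lemma \ref{Sol:Lem0}; this yields the pointwise-in-time convolution bound \eqref{Lem1:e5}, after which the weighted $L^p$ estimate follows from Young's inequality and \eqref{Model:e5}. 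You instead identify the integral with $\Delta y$ for the Duhamel solution $y$ of the heat equation with source $B(\cdot)v$, pass to the damped unknown $e^{-\gamma t}y$, and invoke the maximal $L^p$-regularity of Lemma \ref{lem:Lp} twice together with two Young inequalities and $\|e^{-\gamma\cdot}g\|_{L^1}\le Q\gamma^{\alpha^*-1}$. This is shorter, avoids the contour/resolvent machinery and the $g'$ estimates entirely, and only needs $|g(t)|\le Q|\ln t|\le Q t^{-\alpha^*}$. What the paper's argument buys in exchange is (i) the stronger pointwise bound \eqref{Lem1:e5}, (ii) a direct meaning for the singular integral itself (your identification $\int_0^t E'(t-s)F(s)\,ds=\Delta y$ is exactly how the paper uses the lemma at \eqref{mapM:e4}, so it is consistent with the intended interpretation, but your differentiation of Duhamel's formula is only formal since the integral is not absolutely convergent for $F\in L^p(L^2)$ — you correctly flag this, and it deserves the same care the paper spends via its $\varepsilon$-trick), and (iii) a constant obtained purely from Young's inequality, hence independent of $p$ as stated in the lemma, whereas your constant inherits the $p$-dependence of the maximal-regularity constant in Lemma \ref{lem:Lp}; this last point is harmless where the lemma is applied (Theorem \ref{thm:Stab} fixes $p$ and allows $Q=Q(\alpha^*,T,p)$), but it is a small mismatch with the literal statement. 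Your treatment of the edge case $\alpha^*=0$ (then $g\equiv0$) and the $\gamma$-absorption step $\gamma\|e^{-\gamma t}y\|_{L^p(L^2)}\le Q\|e^{-\gamma t}B(\cdot)v\|_{L^p(L^2)}$ are both sound.
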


\begin{proof}  For $0 < \varepsilon \ll 1$, we use \eqref{Et:Express} and Laplace transform to  evaluate
\begin{equation}\label{Lem1:Lap} \begin{array}{ll}
%& \hspace{-0.3in} \ds \mathcal{L} \bigg[  \int_0^t  E(t-s) \Big ( I_s^{1-\alpha(s)} (B(s) v(\bm x, s)) \Big ) ds \bigg] \\[0.1in]
%& \hspace{-0.15in} = \ds \mathcal{L} \bigg[  \int_0^t  E(t-s) A A^{-1} \Big ( I_s^{1-\alpha(s)} (B(s) v(\bm x, s)) \Big ) ds \bigg] \\[0.1in]
& \hspace{-0.3in} \ds \mathcal{L} \bigg[  \int_0^t  E^{\prime}(t-s) B(s) v(\bm x, s) ds \bigg] \\[0.1in]
& \hspace{-0.1in} \ds =\mathcal{L}\bigg[   \int_0^t \p_t \bigg( \f{1}{2\pi \mathrm {i}} \int_{\Gamma_{\theta}} e^{z(t-s)} (z+A)^{-1}dz\bigg)  \big(  B(s) v(\bm x, s)\big)ds \bigg] \\[0.15in]
 &  \hspace{-0.15in} \ds = \mathcal{L}\bigg[   \int_0^t  \bigg( \f{1}{2\pi \mathrm{i}} \int_{\Gamma_{\theta}} e^{z(t-s)} z (z+ A)^{-1}dz\bigg)  (B(s) v(\bm x, s))  ds \bigg] \\[0.15in]
 &  \hspace{-0.15in} \ds = \mathcal{L}\bigg[   \f{1}{2\pi \mathrm{i}} \int_{\Gamma_{\theta}} e^{zt} z (z+ A)^{-1}dz\bigg]  \mathcal{L} \big( B(t) v(\bm x,t) \big)\\[0.15in]
 &  \hspace{-0.15in} \ds = \big(z^{1-\varepsilon} (z+ A)^{-1}\big) \bigl (z^{\varepsilon} \mathcal{L} \big(B(t) v(\bm x,t)  \big ).
\end{array}\end{equation}
Take the inverse Laplace transform of \eqref{Lem1:Lap} and use \eqref{Laplace} to obtain
\begin{equation}\label{Lem1:Inv}\begin{array}{l}
\ds \hspace{-0.1in} \int_0^t E^\prime(t-s) B(s) v(\bm x, s) ds \\[0.1in]
\ds  = \mathcal{L}^{-1} \bigg[\big(z^{1-\varepsilon} (z+A)^{-1}\big) \big(z^{\varepsilon} \mathcal{L} (B(t) v(\bm x,t))\big ) \bigg] \\[0.1in]
\ds = \mathcal{L}^{-1} \bigg[\big(z^{1-\varepsilon} (z+A)^{-1}\big)\bigg] * \mathcal{L}^{-1} \Big[ z^{\varepsilon} \mathcal{L}\big ( B(t) v(\bm x,t)) \big ) \Big] \\[0.1in]
\ds = \int_0^t \bigg[ \f{1}{2\pi \mathrm{i}} \int_{\Gamma_{\theta}} e^{z(t-s)} z^{1-\varepsilon}(z+ A)^{-1}dz\bigg] {}^{R} \p_s^{\varepsilon} \big (  B(s) v(\bm x,s) \big) ds.
\end{array}\end{equation}
Apply the second estimate in \eqref{GammaEstimate} to bound the term in the square bracket by
\begin{equation}\label{Lem1:e1}
 \bigg\|\f{1}{2\pi \mathrm{i}} \int_{\Gamma_{\theta}} e^{z(t-s)} z^{1-\varepsilon}(z+A)^{-1}dz\bigg\|_{L^2 \rightarrow L^2} \le Q (t-s)^{-(1-\varepsilon)}.
\end{equation}
Evaluate the remaining term in the last integral of \eqref{Lem1:Inv} and interchange the order of integration to obtain
\begin{equation}\label{Lem1:e2}\begin{array}{l}
\ds {}^{R}\p_t^{\varepsilon} \big (B(t) v(\bm x,t) \big) %\\
=\f{\p_t}{\Gamma(1-\varepsilon)}\int_0^t \f{1}{(t-\iota)^{\varepsilon}}\int_0^\iota  g(\iota -s )\Delta v(\bm x, s) ds d\iota\\[0.15in]
\ds \qquad  \qquad \qquad \qquad= \p_t \int_0^t \f{\Delta v(\bm x,s)}{\Gamma(1-\varepsilon)}\int_s^t (t-\iota)^{-\varepsilon} g(\iota-s )d\iota ds\\[0.15in]
\ds \qquad  \qquad \qquad \qquad=  \int_0^t \f{\Delta v(\bm x,s)}{\Gamma(1-\varepsilon)} \p_t \int_s^t (t-\iota)^{-\varepsilon} g(\iota-s )d\iota ds,
\end{array}\end{equation}
where we have used fact  derived from the most singular estimate  for $g$  in Lemma \ref{Sol:Lem0}, i.e., $|g(t)| \le Q \big|\ln(t)\big|$  that
\begin{equation*}\begin{array}{l}
  \ds \bigg|\int_s^t (t-\iota)^{-\varepsilon} g(\iota-s )d\iota \bigg| \le   \ds \bigg|\int_s^t (t-\iota)^{-\varepsilon} \big|\ln(\iota-s )\big|d\iota \bigg|\\[0.1in]
   \ds \le Q \ds \bigg|\int_s^t (t-\iota)^{-\varepsilon} (\iota-s )^{-\alpha^*}d\iota \bigg|
  \ds   =Q (t-s)^{1-\varepsilon-\alpha^*} \rightarrow 0, \quad \mbox{as} \,\,  s \rightarrow t^-.
  \end{array}
\end{equation*}
We introduce the variable substitution $z = (\iota-s)/(t-s)$, which implies
\begin{equation}\label{Lem1vo:sub}
  \iota-s=(t-s)z, \quad t-\iota=(t-s) (1-z), \quad d \iota =(t-s) d z,
\end{equation}
to reformulate the inner integral of \eqref{Lem1:e2} as
$$
  \ds \int_s^t (t-\iota)^{-\varepsilon} g(\iota-s )d\iota = \int_0^1 (t-s)^{1-\varepsilon}(1-z)^{-\varepsilon} g( (t-s)z)dz.
  $$
Differentiate the above expression to derive
\begin{equation}\begin{array}{l}\label{Lem1:e3}
\ds \p_t \int_s^t (t-\iota)^{-\varepsilon} g(\iota-s )d\iota ds = \p_t \int_0^1 (t-s)^{1-\varepsilon}(1-z)^{-\varepsilon} g( (t-s)z)dz\\[0.1in]
\ds \qquad \ds =\int_0^1 \p_t \big[(t-s)^{1-\varepsilon}\big](1-z)^{-\varepsilon} g( (t-s)z)dz\\[0.125in]
\ds \qquad \ds  + \int_0^1 (t-s)^{1-\varepsilon}(1-z)^{-\varepsilon} \p_t \big[g( (t-s)z)\big]dz : = K_1 + K_2.
\ds
  \end{array}
\end{equation}

We combine  the estimate  for $g$ in Lemma \ref{Sol:Lem0}
%\begin{equation}\begin{array}{l}\label{Lem1:ln}
%  \ds  \big|\ln[(t-s)z]\big| = Q \frac{\big|\ln[(t-s)z] \big|[(t-s)z]^{\alpha*}}{[(t-s)z]^{\alpha*}}   \le \frac{Q}{[(t-s)z]^{\alpha^*}},
%  \end{array}
%\end{equation}
  as well as  the definition of the Beta function $B(\cdot, \cdot)$ \cite{Pod} to bound $K_1$ in \eqref{Lem1:e3} by
\begin{equation}\begin{array}{l}\label{Lem1:I1}
\ds |K_1| \le Q \int_0^1 (t-s)^{-\varepsilon}(1-z)^{-\varepsilon} |g( (t-s)z) |dz\\[0.1in]
\ds \qquad\le Q \int_0^1 (t-s)^{-\varepsilon}(1-z)^{-\varepsilon}     \big|\ln[(t-s)z]\big|dz\\[0.1in]
\ds  \qquad\le Q \int_0^1 (t-s)^{-\varepsilon-\alpha^*}(1-z)^{-\varepsilon}  z^{-\alpha^*} dz\\[0.1in]
\ds \qquad\le  Q (t-s)^{-\varepsilon-\alpha^*} B(1-\varepsilon,1-\alpha^* ) \le Q(t-s)^{-\varepsilon-\alpha^*}.
  \end{array}
\end{equation}

We then combine  the second estimate in \eqref{Lem0:est:e0} to similarly bound  $K_2$ in \eqref{Lem1:e3}
\begin{equation}\begin{array}{l}\label{Lem1:I2}
\ds |K_2| \le Q \int_0^1 (t-s)^{1-\varepsilon}(1-z)^{-\varepsilon} \p_y g( y)\big|_{y= (t-s)z} \f{d y}{dt} dz\\[0.1in]
\ds \qquad \le Q \int_0^1 (t-s)^{1-\varepsilon}(1-z)^{-\varepsilon} [(t-s)z]^{-1} z \,dz \\[0.1in]
\ds \qquad  \le Q \int_0^1 (t-s)^{-\varepsilon}(1-z)^{-\varepsilon}  dz \le  Q (t-s)^{-\varepsilon},
  \end{array}
\end{equation}
which, combined with  \eqref{Lem1:e2}, \eqref{Lem1:e3} and \eqref{Lem1:I1}, yields that
\begin{equation}\label{Lem1:e4}\begin{array}{l}
\ds\big| {}^{R}\p_t^{\varepsilon} \big (B(t) v(\bm x,t) \big)\big|
\ds \le Q \int_0^t \big|\Delta v(\bm x,s)\big|  (t-s)^{-\varepsilon-\alpha^*} ds.
\end{array}\end{equation}

We apply  \eqref{Lem1:e1} and  \eqref{Lem1:e4} to bound \eqref{Lem1:Inv} by
\begin{equation}\label{Lem1:e5}\begin{array}{l}
\ds \bigg \| \int_0^t E^\prime(t-s)  (B(s) v(\bm x, s)) ds \bigg\|_{L^2} \\[0.15in]
\ds \quad \le Q \int_0^t \f{1}{(t-s)^{1-\varepsilon}} \int_0^s (s-\iota)^{-\varepsilon-\alpha^*} \|\Delta v(\cdot,\iota) \|_{L^2} d\iota ds \\ [0.1in]
\ds \quad = Q \int_0^t \|\Delta v(\cdot,\iota) \|_{L^2}  \int_\iota^t \f{ds d\iota}{(t-s)^{1-\varepsilon}(s-\iota)^{\varepsilon+\alpha^*}} \le Q  \int_0^t \f{\|\Delta v(\cdot,\iota)\|_{L^2}d\iota}{(t-\iota)^{\alpha^*}}.
\end{array}\end{equation}

We multiply \eqref{Lem1:e5} by $e^{-\gamma t}$, take the $L^p(0,T)$ norm in time, and apply Young's convolution inequality and the last inequality in \eqref{Model:e5} to obtain
\begin{equation*}\begin{array}{l}
\ds \bigg \| e^{-\gamma t}\int_0^t   E^\prime(t-s) B(s) v ds \bigg \|_{L^p(L^2)}  \le Q \bigg \| \int_0^t \f{e^{-\gamma (t-s)} }{(t-s)^{\alpha^*}} e^{-\gamma s} \|\Delta v(\cdot, s)\|_{L^2} ds \bigg\|_{L^p(0, T)}\\[0.1in]
\ds \qquad
%\ds \quad \le Q \bigg \| (e^{-\gamma t} t^{-\alpha^*}) * (e^{-\gamma t} \|v(\cdot, t)\|_{L^2} ) \bigg\|_{L^p(0,T)} \\[0.05in]
\ds \qquad \le Q \int_0^T e^{-\gamma t} t^{-\alpha^*} dt ~ \| v(\cdot, t) \|_{\mathcal{X}_\gamma}
\le Q\gamma^{\alpha^*-1} \| v \|_{\mathcal{X}_\gamma},
\end{array}\end{equation*}
we thus complete the proof of the lemma.
\end{proof}
\subsection{Well-posedness of model (\ref{VtFDEs})}
We first prove the well-posedness of the equivalent model (\ref{VtFDEs1}) in the following theorem.
\begin{thm}\label{thm:Stab}
Suppose that  Assumption A holds, the problem \eqref{VtFDEs1} has a unique solution $u\in W^{1,p}(L^2)\cap L^p(\check{H}^2) $ for $1 < p < \infty $, and
\begin{equation*}\label{StabEstimate}
\|u\|_{W^{1,p}(L^2)} +\|u\|_{L^p(\check{H}^2)}\le Q \big (\|f\|_{L^p(L^2)} + \| u_0\|_{\check H^2} \big ), \quad Q=Q(\alpha^*,T,p).
\end{equation*}
\end{thm}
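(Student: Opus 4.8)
The plan is to recast \eqref{VtFDEs1} as a linear Volterra equation of the second kind for $z:=u-E(t)u_0$, to solve it by Banach's contraction principle in the weighted space $\mathcal X_\gamma$ (with $\gamma$ large) using the estimate of Lemma \ref{Sol:Lem1}, and then to upgrade the resulting $L^p(\check H^2)$-solution to full $W^{1,p}(L^2)\cap L^p(\check H^2)$-regularity via Lemma \ref{lem:Lp}. First I would peel off the initial datum by setting $\phi(t):=E(t)u_0$. By \eqref{E:est}, $\|\phi(t)\|_{\check H^2}\le Q\|u_0\|_{\check H^2}$ and $\|\p_t\phi(t)\|_{L^2}=\|E(t)\Delta u_0\|_{L^2}\le Q\|u_0\|_{\check H^2}$, so $\phi\in W^{1,p}(L^2)\cap L^p(\check H^2)$, and $\phi$ solves the heat equation with datum $u_0$ and zero source. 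Writing $u=z+\phi$, problem \eqref{VtFDEs1} is equivalent to finding $z$ with $z(0)=0$ solving $\p_t z-\Delta z=\widetilde F(t):=f(t)+B(t)\phi(t)+B(t)z(t)$. Since Lemma \ref{Sol:Lem0} gives $|g(t)|\le Q|\ln t|$ in every case, $g\in L^1(0,T)$, and Young's convolution inequality yields $\|B(\cdot)\phi\|_{L^p(L^2)}=\|g*\Delta\phi\|_{L^p(L^2)}\le\|g\|_{L^1(0,T)}\|\Delta\phi\|_{L^p(L^2)}\le Q\|u_0\|_{\check H^2}$; hence $F:=f+B(\cdot)\phi\in L^p(L^2)$ with $\|F\|_{L^p(L^2)}\le\|f\|_{L^p(L^2)}+Q\|u_0\|_{\check H^2}$.

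By Duhamel's principle \eqref{HeatSoln}, $z$ solves the above problem if and only if $z=\Phi+\mathcal L z$, where $\Phi(t):=\int_0^t E(t-s)F(s)\,ds$ and $\mathcal L v:=\int_0^t E(t-s)B(s)v(s)\,ds$. By Lemma \ref{lem:Lp}, $\Phi\in L^p(\check H^2)$ with $\Phi(0)=0$, so $\Phi\in\mathcal X_\gamma$ with $\|\Phi\|_{\mathcal X_\gamma}\le Q\|F\|_{L^p(L^2)}$; and $\mathcal L$ is linear with, using $\Delta\mathcal L v=\int_0^t E^\prime(t-s)B(s)v(s)\,ds$ together with Lemma \ref{Sol:Lem1}, $\|\mathcal L v\|_{\mathcal X_\gamma}=\big\|e^{-\gamma t}\int_0^t E^\prime(t-s)B(s)v\,ds\big\|_{L^p(L^2)}\le Q\gamma^{\alpha^*-1}\|v\|_{\mathcal X_\gamma}$. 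Since $\alpha^*<1$, I would fix $\gamma=\gamma(\alpha^*,T)$ so large that $Q\gamma^{\alpha^*-1}\le 1/2$; then $z\mapsto\Phi+\mathcal L z$ is a contraction on $\mathcal X_\gamma$, so it has a unique fixed point $z\in\mathcal X_\gamma$, and $\|z\|_{\mathcal X_\gamma}\le\|\Phi\|_{\mathcal X_\gamma}+(1/2)\|z\|_{\mathcal X_\gamma}$ gives $\|z\|_{\mathcal X_\gamma}\le 2\|\Phi\|_{\mathcal X_\gamma}\le Q\|F\|_{L^p(L^2)}$. As $e^{-\gamma T}\le e^{-\gamma t}\le 1$ on $[0,T]$, the norm $\|\cdot\|_{\mathcal X_\gamma}$ is equivalent to $\|\cdot\|_{L^p(\check H^2)}$ with constants depending only on $\gamma$, hence on $\alpha^*,T$, whence $\|z\|_{L^p(\check H^2)}\le Q(\|f\|_{L^p(L^2)}+\|u_0\|_{\check H^2})$.

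It remains to bootstrap the regularity and to prove uniqueness. With $z\in L^p(\check H^2)$ in hand, $B(\cdot)z=g*\Delta z\in L^p(L^2)$, so $\widetilde F=F+B(\cdot)z\in L^p(L^2)$ with $\|\widetilde F\|_{L^p(L^2)}\le\|f\|_{L^p(L^2)}+Q\|u_0\|_{\check H^2}+\|g\|_{L^1(0,T)}\|z\|_{L^p(\check H^2)}\le Q(\|f\|_{L^p(L^2)}+\|u_0\|_{\check H^2})$ by the bound just obtained; since $z=\int_0^t E(t-s)\widetilde F(s)\,ds$, Lemma \ref{lem:Lp} gives $z\in W^{1,p}(L^2)\cap L^p(\check H^2)$ with $\|z\|_{W^{1,p}(L^2)}+\|z\|_{L^p(\check H^2)}\le Q\|\widetilde F\|_{L^p(L^2)}\le Q(\|f\|_{L^p(L^2)}+\|u_0\|_{\check H^2})$. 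Then $u:=z+\phi$ solves \eqref{VtFDEs1} and obeys the asserted estimate with $Q=Q(\alpha^*,T,p)$. For uniqueness, if $u_1,u_2\in W^{1,p}(L^2)\cap L^p(\check H^2)$ both solve \eqref{VtFDEs1}, then $w:=u_1-u_2$ solves the heat equation with source $B(t)w$ and zero datum, so $w\in\mathcal X_\gamma$ and $w=\mathcal L w$; the contraction bound forces $\|w\|_{\mathcal X_\gamma}\le(1/2)\|w\|_{\mathcal X_\gamma}$, i.e. $w=0$ and $u_1=u_2$. The crux of the argument---the gain of the small factor $\gamma^{\alpha^*-1}$ that tames the nonlocal variable-exponent term $B(t)$---is already supplied by Lemma \ref{Sol:Lem1}; the remaining care lies only in the passage between the $\mathcal X_\gamma$-solution and the $W^{1,p}(L^2)\cap L^p(\check H^2)$-solution, and in checking that every convolution against $g$ is absorbed via $g\in L^1(0,T)$.
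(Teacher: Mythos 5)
Your argument is correct and rests on the same two pillars as the paper's proof---the contraction estimate of Lemma \ref{Sol:Lem1} in the weighted space $\mathcal X_\gamma$ and the maximal $L^p$-regularity of Lemma \ref{lem:Lp}---but it organizes them differently. The paper removes the initial datum by subtracting the constant $u_0$, which creates the extra sources $\Delta u_0 + B(t)u_0$, and it extracts the a priori bound in a separate step: a weighted estimate, a bound of $\|e^{-\gamma t}w\|_{L^p(L^2)}$ by the time integral of $\|\partial_s w\|$, and Gronwall's inequality (at the price of a constant of the form $e^{Q\gamma^p T}$). You instead subtract $\phi=E(t)u_0$, so the only extra source is $B(t)\phi$, absorbed via $g\in L^1(0,T)$ and $\|E(t)\Delta u_0\|\le Q\|u_0\|_{\check H^2}$, and you read the stability bound directly off the affine contraction, $\|z\|_{\mathcal X_\gamma}\le 2\|\Phi\|_{\mathcal X_\gamma}$, converting to the unweighted norm through $e^{-\gamma T}\le e^{-\gamma t}\le 1$ and recovering the $W^{1,p}(L^2)$ part by one more application of Lemma \ref{lem:Lp} to the now-controlled right-hand side $\widetilde F$. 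This bypasses the paper's Gronwall step and gives the same estimate with a cleaner constant; your uniqueness argument (the difference satisfies $w=\mathcal L w$, and the contraction forces $w=0$) coincides with the paper's. Two minor points to tighten: the blanket claim that $|g(t)|\le Q|\ln(t)|$ holds ``in every case'' should be justified by observing that the Case 1 bound in Lemma \ref{Sol:Lem0} uses only the boundedness of $\alpha'$ and $\alpha''$ from Assumption A, hence holds in general (the paper uses it in exactly this way in \eqref{mapM:e2}); and the identity $\Delta\mathcal L v=\int_0^t E'(t-s)B(s)v\,ds$ deserves a word of justification ($E'(t)=\Delta E(t)$ together with commuting $\Delta$ past the time integral), as in \eqref{mapM:e4}.
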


\begin{proof}
%We begin by assuming $u_0 = 0$ by replacing $u$ and $f$ in model \eqref{VtFDEs1} with $u-u_0$ and $f + \Delta u_0 + B(t) u_0$.
We prove the theorem in two steps.
\paragraph{Step 1: Well-posedness and contraction property of $\mathcal M$}
We define a map $\mathcal M$ by $w := \mathcal M v$ for any $v \in \mathcal X_\gamma$
\begin{equation} \label{mapM}\begin{array}{c}
\dt w(\bm x,t) - \Delta w(\bm x,t) = f(\bm x,t) + B(t) v(\bm x,t), \quad (\bm x,t)\in\Omega\times(0,T]; \\[0.075in]
w(\bm x,t)=0, \quad (\bm x,t) \in \partial\Omega\times(0,T];  \qquad w(\bm x,0)=0, \quad \bm x \in \Omega.
\end{array}\end{equation}
We employ that fact that $ B(t) v = g( t)* \Delta v$, the estimates for $g( t)$ in Lemma \ref{Sol:Lem0} and  Young's convolution inequality \cite{AdaFou} to bound
\begin{equation}\label{mapM:e2}\begin{array}{rl}
\ds\hspace{-0.1in} \big \| e^{-\gamma t} B(t) v \big \|_{L^p(L^2)}
&\ds = \bigg \|e^{-\gamma t}\int_0^t g( t-s) \Delta v(\bm x,s) d s \bigg\|_{L^p(L^2)}\\[0.15in]
&\ds\leq Q \bigg \| \int_0^t e^{-\gamma (t-s)} |\ln( t-s) | e^{-\gamma s}|\Delta v(\bm x,s) | ds \bigg \|_{L^p(L^2)}\\[0.15in]
& \ds \leq Q \big \|\big( e^{-\gamma t}t^{-\alpha_*}\big)*\big(e^{-\gamma t} \big \|\Delta v \big \|_{L^2(\Omega)}\big)\big \|_{L^p(0,T)}\\[0.075in]
%&\ds = Q \Big \| (e^{-\lambda t}t^{-\alpha_*})*\big (e^{-\lambda t} \big \| \dt v \big \|_{L^2(\Omega)} \big) \Big \|_{L^p(0,T)} \\ [0.15in]
%&\ds \le Q \big \|e^{-\lambda t}t^{-\alpha_*} \big \|_{L^1(0,T)} \big \|e^{-\lambda t}\dt v \big \|_{L^p(L^2)} \\[0.1in]
&\ds  \leq Q\gamma^{\alpha_*-1} \big \|e^{-\gamma t} \Delta v \big \|_{L^p(L^2)} =  Q\gamma^{\alpha_*-1} \|v\|_{\mathcal X_\gamma}.
\end{array}\end{equation}
Hence $B(t) v \in L^p(L^2)$. By Lemma \ref{lem:Lp} problem \eqref{mapM} has a unique solution $w \in \mathcal X_\gamma$ and the mapping $\mathcal M: \mathcal X_\gamma \rightarrow X_\gamma$ is well defined.

Let $w_1 = \mathcal Mv_1$ and $w_2 = \mathcal Mv_2$ for $v_1,v_2 \in \mathcal X_\gamma$, then $w = w_1 - w_2 \in \mathcal X_\gamma$ satisfies
\begin{equation*} \label{map_1}
\ds \dt w(\bm x,t) - \Delta w(\bm x,t)  = B(t) v(\bm x,t),~~v:=v_1-v_2,
\end{equation*}
equipped with the zero initial and boundary conditions. Then \eqref{HeatSoln} becomes
\begin{equation}\label{mapM:e3}
w(\bm x,t) = \int_0^{t} E(t-s) \big(B(s) v(\bm x,s)  \big)d s.
\end{equation}
We apply the operator $\Delta$ on both sides of  (\ref{mapM:e3}), multiply the resulting equation by $e^{-\gamma t}$ and then incorporate $E^{\prime}(t) = \Delta E(t)$ to get
\begin{equation}\begin{array}{l}\label{mapM:e4}
\ds e^{-\gamma t} \Delta w(\bm x,t) = e^{-\gamma t}\int_0^t \Delta E(t-s)\big(B(s) v(\bm x,s)  \big)ds\\[0.1in]
\ds \qquad  \qquad \qquad= e^{-\gamma t}\int_0^t  E^{\prime}(t-s) \big(B(s) v(\bm x,s)  \big)ds.
\end{array}
\end{equation}
Use Lemma \ref{Sol:Lem1} to bound the right-hand side term of \eqref{mapM:e4} by
\begin{equation*}\label{Thm10:e4}
\|  w\|_{\mathcal{X}_{\gamma}} \le Q \gamma^{\alpha^*-1} \| v \|_{\mathcal{X}_{\gamma}}.
\end{equation*}
Consequently,  choose $\gamma$ large enough such that $Q \gamma^{\alpha^*-1} \le Q^* < 1$ to ensure that the mapping $\mathcal M: \mathcal X_\gamma \rightarrow \mathcal X_\gamma$ is a contraction.
 By the Banach fixed point theorem, $\mathcal M$ has a unique fixed point $w = \mathcal M w \in \mathcal X_\gamma$, that is, \eqref{VtFDEs} with $u_0=0$ has a unique solution in $\mathcal X$.

\paragraph{Step 2. Stability estimate of $u$} With the fixed point $w$, problem \eqref{mapM} becomes
\begin{equation*}
\begin{array}{c}
\dt w(\bm x,t) - \Delta w(\bm x,t) = f(\bm x,t) + B(t) w(\bm x,t), ~~(\bm x,t)\in\Omega\times(0,T]; \\[0.075in]
w(\bm x,t)=0, ~~(\bm x,t) \in \partial\Omega\times(0,T];  \quad w(\bm x,0)=0, ~~\bm x \in \Omega.
\end{array}\end{equation*}
We multiply $e^{-\gamma t}$ on both sides to rewrite the equation as
$$\p_t(e^{-\gamma t}w)-\Delta (e^{-\gamma t}w)=-\gamma e^{-\gamma t}w+e^{-\gamma t}f + e^{-\gamma t}B(t) w.$$
We apply (\ref{mapM:e2}) and Lemma \ref{lem:Lp} to conclude that for any $0<\bar t\leq T$
\begin{equation}\label{Stab:e1}\begin{array}{l}
\hspace{-0.1in} \| e^{-\gamma t}w \|_{W^{1,p}(0,\bar t;L^2)} +\| e^{-\gamma t}w \|_{L^p(0,\bar t;\check H^2)}  \\[0.075in]
\ds \quad \le Q \big\|-\gamma e^{-\gamma t}w+e^{-\gamma t}f + e^{-\gamma t}B(t) w\big\|_{L^p(0,\bar t;L^2)}\\[0.075in]
\ds \quad \leq Q \big ( \| f \|_{L^p(0,\bar t;L^2)} +\gamma \|e^{-\gamma t}w\|_{L^p(0,\bar t;L^2)}
+ \gamma^{\alpha_*-1}\big \|e^{-\gamma t} w \big \|_{L^p(0,\bar t;\check H^2)} \big ).
\end{array}\end{equation}
Choosing $\gamma$ in \eqref{Stab:e1} sufficiently large yields
\begin{equation}\label{Stab:e2}
\hspace{-0.1in}\big \|e^{-\gamma t}\dt w \big \|_{L^p(0,\bar t;L^2)}  +\| e^{-\gamma t}w \|_{L^p(0,\bar t;\check H^2)} \le Q\gamma \|e^{-\gamma t}w\|_{L^p(0,\bar t;L^2)}+Q \| f \|_{L^p(0,\bar t;L^2)},
\end{equation}
where $Q$ is independent of $\gamma$. We bound the first term on the right-hand side by H\"older's inequality as
$$\begin{array}{l}
\ds \big \|e^{-\gamma t}w \big \|_{L^p(0,\bar t;L^2)}^p=\int_0^{\bar t}\Big\| e^{-\gamma t}\int_0^t \p_s w(\cdot,s)ds\Big\|^p_{L^2}dt \\[0.1in]
\ds\quad \leq Q\int_0^{\bar t}\Big(\int_0^t \big\| e^{-\gamma s} \p_s w(\cdot,s)\big\|_{L^2}ds\Big)^p\d t %\\[0.125in]
\leq Q \int_0^{\bar t}\|e^{-\gamma s}\p_s w\|^p_{L^p(0,t;L^2)}d t.
\end{array}$$
We plug this estimate into (\ref{Stab:e2}) to find
\begin{equation*}\begin{array}{l}
\big \|e^{\gamma t}\dt w \big \|^p_{L^p(0,\bar t;L^2)}  +\| e^{-\gamma t}w \|_{L^p(0,\bar t;\check H^2)}^p \\[0.1in]
\ds \qquad  \le Q \gamma^p \int_0^{\bar t}\|e^{-\gamma s}\p_s w\|^p_{L^p(0,s;L^2)}d s+Q \| f \|^p_{L^p(0,T;L^2)}, ~~0<\bar t \leq T.
\end{array}
\end{equation*}
Application of Gronwall's inequality yields
\begin{equation}\label{Stab:e3}
\big \|e^{-\gamma t}\dt w \big \|_{L^p(0,\bar t;L^2)} + \| e^{-\gamma t}w \|_{L^p(0,\bar t;\check H^2)} \leq Q e^{Q\gamma^p T} \| f \|_{L^p(0,T;L^2)}, ~~0<\bar t \leq T.
\end{equation}

Let $u$ be the solution to problem \eqref{VtFDEs1}. Then $w(\bm x,t) := u(\bm x,t) - u_0(\bm x)$ satisfies
\begin{equation}\label{Stab:e4}\begin{array}{l}
\hspace{-0.05in} \dt w(\bm x,t) - \Delta w(\bm x,t) = f(\bm x,t) +B(t) w(\bm x,t) +\Delta u_0(\bm x)+ B(t) u_0, \\[0.05in]
\hspace{2in} ~(\bm x,t) \in \Omega\times(0,T]; \\[0.05in]
w(\bm x,t) = 0, ~~(\bm x,t)\in \partial\Omega\times(0,T]; \quad w(\bm x,0)=0, ~~\bm x \in \Omega.
\end{array}\end{equation}
Use the estimate \eqref{Stab:e3}  to problem \eqref{Stab:e4} and the estimates in Lemma \ref{Sol:Lem0} to conclude that for some positive $Q = Q(\gamma)$
\begin{equation*}\begin{array}{l}
\hspace{-0.1in} \ds \|u\|_{W^{1,p}(0,T;L^2)} \!+\! \|u\|_{L^{p}(0,T;\check H^2)} \le \| w \|_{W^{1,p}(0,T;L^2)} + \|w\|_{L^{p}(0,T;\check H^2)}+ \| u_0 \|_{\check H^2} \\[0.1in]
\ds \qquad \qquad \qquad \qquad \qquad \qquad  \le Q \big (\|f\|_{L^p(0,T;L^2)}\! + \!\| u_0\|_{\check H^2} \!+\! \|g * 1\|_{L^p(0,T)} \|\Delta u_0 \|_{L^2} \big )\\[0.1in]
\ds \qquad \qquad \qquad \qquad \qquad \qquad  \le Q \big (\|f\|_{L^p(0,T;L^2)} + \|u_0\|_{\check H^2} + \|g \|_{L^1(0,T)} \| \Delta u_0 \|_{L^2} \big )\\[0.1in]
\ds \qquad \qquad \qquad \qquad \qquad \qquad \le Q \big (\|f\|_{L^p(0,T;L^2)} + \| u_0\|_{\check H^2} ).
\end{array}
\end{equation*}
%Then we apply this equation, (\ref{VtFDEs}) and a similar estimate to (\ref{mapM:e2}) to find
%\begin{equation*}\begin{array}{rl}
%\|u\|_{L^p(\check{H}^2)} &  \ds \leq Q\|\Delta u\|_{L^p(L^2)}=Q \big\|\p_t u+ \kappa(\bm x,t)\dt^{\bar\alpha(t)} u-f\big\|_{L^p(L^2)}\\[0.075in]
%&  \ds\leq Q\big(\|u\|_{W^{1,p}(L^2)}+\|f\|_{L^p(L^2)}\big) \leq Q \big (\|f\|_{L^p(L^2)} + \|\Delta u_0\|_{L^2} \big ).
%\end{array}\end{equation*}
We thus complete the proof.
\end{proof}

\begin{cor}\label{Cor:Stab}
Suppose that  Assumption A holds, the multiscale diffusion model  \eqref{VtFDEs} has a unique solution $u\in W^{1,p}(L^2)\cap L^p(\check{H}^2) $ for $1 < p < \infty $, and
\begin{equation}\label{Cor:StabEstimate}
\|u\|_{W^{1,p}(L^2)} +\|u\|_{L^p(\check{H}^2)}\le Q \big (\|f\|_{L^p(L^2)} + \| u_0\|_{\check H^2} \big ), \quad Q=Q(\alpha^*,T,p).
\end{equation}
\end{cor}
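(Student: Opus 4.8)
The plan is to obtain Corollary~\ref{Cor:Stab} as an immediate consequence of the model-equivalence result of Theorem~\ref{Model} combined with the well-posedness result of Theorem~\ref{thm:Stab} for the reformulated problem~\eqref{VtFDEs1}. For existence, I would start from the unique solution $u\in W^{1,p}(L^2)\cap L^p(\check{H}^2)$ of~\eqref{VtFDEs1} furnished by Theorem~\ref{thm:Stab}, and invoke the converse direction of Theorem~\ref{Model} to conclude that this same $u$ solves the original multiscale diffusion model~\eqref{VtFDEs}; the bound~\eqref{Cor:StabEstimate} is then nothing but the stability estimate already established in Theorem~\ref{thm:Stab}. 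For uniqueness, I would take two solutions $u_1,u_2$ of~\eqref{VtFDEs} in the class $W^{1,p}(L^2)\cap L^p(\check{H}^2)$, apply the forward direction of Theorem~\ref{Model} to see that both solve~\eqref{VtFDEs1}, and use the uniqueness asserted in Theorem~\ref{thm:Stab} to deduce $u_1=u_2$.

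The one point that genuinely needs attention is that the equivalence in Theorem~\ref{Model} be legitimate for functions of exactly the regularity produced by Theorem~\ref{thm:Stab}. I would record that $W^{1,p}(0,T;L^2)\hookrightarrow C([0,T];L^2)$, so the pointwise initial condition $u(\bm x,0)=u_0(\bm x)$ is meaningful and common to both formulations; and that for $u\in L^p(\check{H}^2)$ the map $s\mapsto\Delta u(\cdot,s)$ belongs to $L^p(0,T;L^2)$, so that---the kernel $(t-s)^{-\alpha(t-s)}/\Gamma(1-\alpha(t-s))$ being bounded by~\eqref{Model:e5} and hence integrable near $s=t$---the convolution in~\eqref{var_RL}, the l'Hospital limits in~\eqref{Lop:e1}, the Leibniz differentiation of the $t$-parametrized integral, and the Fubini interchange in~\eqref{Leib:e1} are all justified using only the boundedness of $\alpha,\alpha',\alpha''$ from Assumption~A together with $\Delta u\in L^p(0,T;L^2)$. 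This turns $\p_t^{\alpha(t)}\Delta u=g(t)*\Delta u+\Delta u$ into an identity in $L^p(0,T;L^2)$ for every $u$ in the relevant class, so that~\eqref{VtFDEs} and~\eqref{VtFDEs1} are equivalent as equations posed in $L^p(0,T;L^2)$.

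The main obstacle is thus not analytic but a matter of logical hygiene: one must apply the converse half of Theorem~\ref{Model} only to a $u$ that is \emph{already} known to lie in $W^{1,p}(L^2)\cap L^p(\check{H}^2)$---which is precisely what Theorem~\ref{thm:Stab} guarantees---so that the manipulations of~\eqref{Leib:e1} are licensed and no circular reasoning creeps in. Once this is arranged, uniqueness for~\eqref{VtFDEs} transfers verbatim from uniqueness for~\eqref{VtFDEs1} and the stability estimate carries over unchanged, which completes the argument. As a by-product I would also note that the solution of~\eqref{VtFDEs} inherits from~\eqref{VtFDEs1} a Duhamel-type representation through the heat semigroup $E(t)$, a fact that is convenient for the subsequent higher-order regularity analysis.
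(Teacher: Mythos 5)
Your proposal is correct and follows essentially the same route as the paper: the corollary is obtained by combining the equivalence of \eqref{VtFDEs} and \eqref{VtFDEs1} from Theorem~\ref{Model} with the existence, uniqueness and stability estimate of Theorem~\ref{thm:Stab}, transferring both the estimate and the uniqueness verbatim. Your additional remarks on justifying the equivalence at the regularity level $W^{1,p}(L^2)\cap L^p(\check{H}^2)$ are a sensible refinement but do not change the argument, which the paper states in the same two-step form.
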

\begin{proof}
By Theorem \ref{Model}, the problem \eqref{VtFDEs1} could be reformulated to the original problem \eqref{VtFDEs} and thus the regularity estimate \eqref{Cor:StabEstimate} follows from  that in  Theorem \ref{thm:Stab}.
 The uniqueness of the solution to the original problem \eqref{VtFDEs} in $W^{1,p}(L^2) \cap L^p(\check{H}^2)$ follows from that of the problem \eqref{VtFDEs1} by Theorem \ref{Model} and Theorem \ref{thm:Stab}, which completes the proof.
\end{proof}
\section{High-order regularity estimate}\label{S:Hreg}

 We use \eqref{HeatPDE} to express the solution $u$ to the problem \eqref{VtFDEs1} as
\begin{equation}\begin{array}{l}\label{VtSol:e1}
  \ds u(\bm x, t) = E(t ) u_0 + \int_0^t E(t-s) f(\bm x, s) ds + \int_0^t E(t-s) B(s) u (\bm x, s) ds\\[0.1in]
  \ds \qquad \quad := L_1 + L_2.
  \end{array}
\end{equation}

\begin{lem}\label{lem:Append}
Suppose  Assumption A holds and that  $\alpha\in W^{4,\infty}( 0, T)$.  There is a positive $Q=Q(\varepsilon,\alpha^*,\|\alpha\|_{W^{4,\infty}},T)$ such that for $0<t\leq T$ and $0 \le \varepsilon\ll 1$

Case 1: $ \alpha^\prime( 0) \neq 0 :$
\begin{equation}\label{lem:Append:e1}
\big |{}^R\p_t^\varepsilon \p_t \big(B(t) u\big)\big|
\leq Q \int_0^t \frac{ |\ln(t-\theta)|}{(t-\theta)^{\varepsilon}} \big |\p_\theta \Delta u(\bm x,\theta) \big |d\theta +\f{Q |\ln(t)|}{t^{\varepsilon}}|\Delta u_0|.
\end{equation}

Case 2: $\alpha^\prime( 0) = 0:$
\begin{equation}\label{lem:Append:e2}
\big |{}^R\p_t^\varepsilon \p_t \big(B(t) u\big)\big|
\leq Q \int_0^t \frac{\big |\p_\theta \Delta u(\bm x,\theta) \big |d\theta}{(t-\theta)^{\varepsilon}} +\f{Q |\Delta u_0|}{t^{\varepsilon}}.
\end{equation}
\end{lem}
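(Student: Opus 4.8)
The plan is to mimic the structure already used in Lemma \ref{Sol:Lem1}: start from the convolution $B(t)u = g(t)*\Delta u$, rewrite $\Delta u(\bm x,s)$ via the fundamental theorem of calculus as $\Delta u_0 + \int_0^s \p_\theta \Delta u(\bm x,\theta)\,d\theta$ to separate the contribution of the initial data, and then express ${}^R\p_t^\varepsilon \p_t$ acting on the resulting double convolution explicitly, interchanging the order of integration and substituting $z=(\iota-s)/(t-s)$ exactly as in \eqref{Lem1vo:sub}. Concretely, I would write $\p_t(B(t)u) = \p_t\int_0^t g(t-s)\Delta u(\bm x,s)\,ds$, and then apply ${}^R\p_t^\varepsilon = \p_t I_t^{1-\varepsilon}$ to this, producing an expression of the form $\p_t\int_0^t \Delta u(\bm x,s)\,\big[\p_t^{(2)}\int_s^t (t-\iota)^{-\varepsilon}g(\iota-s)\,d\iota\big]\,ds$ after interchanging integrals (a second-order time derivative on the kernel because we apply $\p_t$ twice: once from ${}^R\p_t^\varepsilon$ and once from the $\p_t$ in $\p_t(B(t)u)$). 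The splitting $\Delta u = \Delta u_0 + \int_0^\cdot \p_\theta\Delta u\,d\theta$ then yields the two terms on the right-hand sides of \eqref{lem:Append:e1}--\eqref{lem:Append:e2}, the first from the $\p_\theta\Delta u$ part (after one more interchange of integration in $s$ and $\theta$, picking up $\int_\theta^t(\cdots)\,ds$ which integrates to a factor controlled by $(t-\theta)^{-\varepsilon}$ times a logarithm) and the second from the $\Delta u_0$ part.

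The heart of the estimate is bounding the kernel quantity $\p_t^{(2)}\int_s^t (t-\iota)^{-\varepsilon}g(\iota-s)\,d\iota$. After the substitution $z=(\iota-s)/(t-s)$ this becomes $\p_t^{(2)}\big[(t-s)^{1-\varepsilon}\int_0^1(1-z)^{-\varepsilon}g((t-s)z)\,dz\big]$; carrying out the two $t$-derivatives produces three types of terms, involving $g$, $(t-s)g'$, and $(t-s)^2 g''$ evaluated at $(t-s)z$, each multiplied by a power of $(t-s)$ and an integrable factor $(1-z)^{-\varepsilon}$. Here is where I need the new hypothesis $\alpha\in W^{4,\infty}$: the bounds of Lemma \ref{Sol:Lem0} only control $g$ and $g'$, so I must extend that lemma's argument one order further to get $|g''(t)|\le Q\,t^{-1}$ in Case 1 (respectively $|g''(t)|\le Q|\ln t|$, then $|g''(t)|\le Q$, in the better cases), which requires differentiating the representation $g = \tfrac{t^{-\alpha(t)}}{\Gamma(1-\alpha(t))}(G^2+G')$ once more and tracking the most singular term — this is exactly the step that consumes the extra two derivatives of $\alpha$ (two for $g$ as in Lemma \ref{Sol:Lem0}, two more for the extra derivative here). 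With $|g^{(k)}((t-s)z)|$ bounded by (at worst, Case 1) $Q\,|\ln((t-s)z)|\le Q((t-s)z)^{-\alpha^*}$, $Q((t-s)z)^{-1}$, $Q((t-s)z)^{-1}$ respectively, every resulting $z$-integral is a Beta integral $B(1-\varepsilon,1-\alpha^*)$ or similar and converges, leaving a clean power $(t-s)^{-\varepsilon}$ (Case 2) or $(t-s)^{-\varepsilon}|\ln(t-s)|$-type bound (Case 1, after absorbing $(t-s)^{-\alpha^*}\le Q(t-s)^{-\varepsilon}|\ln(t-s)|$ near the diagonal, or more simply keeping the logarithm as written).

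I expect the main obstacle to be the bookkeeping in the double (then triple) interchange of integration combined with the two applications of $\p_t$: one must be careful that the outermost $\p_t$ (from ${}^R\p_t^\varepsilon = \p_t I_t^{1-\varepsilon}$) hits a quantity that vanishes sufficiently fast as the inner endpoint approaches $t$ so that differentiation under the integral is legitimate, precisely as was checked in Lemma \ref{Sol:Lem1} via the estimate $\big|\int_s^t(t-\iota)^{-\varepsilon}g(\iota-s)\,d\iota\big|\le Q(t-s)^{1-\varepsilon-\alpha^*}\to 0$; here the analogous quantity carries an extra $(t-s)$ from the second derivative, so the vanishing is even more comfortable, but the intermediate boundary terms from each $\p_t$ must be shown to cancel or to be dominated. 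A secondary, purely technical obstacle is the extension of Lemma \ref{Sol:Lem0} to $g''$, which is routine given $\alpha\in W^{4,\infty}$ but requires the additional l'Hospital-type limits (e.g. $\lim_{t\to0^+}\alpha''(t)\ln(t)$, and in Case 3-type situations $\alpha'''(t)t\ln^2 t\to 0$) to certify that the most singular contributions behave as claimed.
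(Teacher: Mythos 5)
There is a genuine gap, and it sits exactly at the step you identify as the heart of the matter. First, your proposed extension of Lemma \ref{Sol:Lem0} is false in Case 1: since $g(t)=\p_t\big(t^{-\alpha(t)}/\Gamma(1-\alpha(t))\big)$ behaves like $-\alpha'(0)\ln t$ near $t=0$ when $\alpha'(0)\neq 0$, one has $g'(t)\sim -\alpha'(0)t^{-1}$ but $g''(t)\sim \alpha'(0)t^{-2}$, not $O(t^{-1})$ (check $\alpha(t)=ct$: $G''$ contains $\alpha'(t)/t^{2}\sim c/t^{2}$). With the correct bound, the term $(t-s)^{1-\varepsilon}\int_0^1(1-z)^{-\varepsilon}z^2 g''((t-s)z)\,dz$ in your expansion is of size $(t-s)^{-1-\varepsilon}$, and indeed the object you place at the center of the argument, $\p_t^{2}\int_s^t(t-\iota)^{-\varepsilon}g(\iota-s)\,d\iota$, genuinely behaves like $(t-s)^{-1-\varepsilon}$ up to logarithms in Case 1 (no cancellation saves it: already for $g=-c\ln$, the second derivative of the once-smoothed kernel is nonintegrable at $s=t$, even at $\varepsilon=0$). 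Consequently the subsequent interchange producing $\int_\theta^t(\cdots)\,ds$ diverges at the upper endpoint, and the claimed kernel $(t-\theta)^{-\varepsilon}|\ln(t-\theta)|$ in \eqref{lem:Append:e1} cannot be reached along this route; relatedly, pushing the second $\p_t$ through the $s$-integral is not legitimate, because the boundary contribution $\Delta u(\bm x,t)\cdot\lim_{s\to t^-}\p_t\int_s^t(t-\iota)^{-\varepsilon}g(\iota-s)\,d\iota$ is infinite (and your displayed formula, with an outer $\p_t$ in addition to the inner $\p_t^{(2)}$, counts three time derivatives where only two are present in ${}^R\p_t^\varepsilon\p_t$ after using $B(t)u|_{t=0}=0$).

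The fix is a matter of ordering, and it is what the paper's Appendix does: perform the splitting of $\Delta u$ (equivalently, integrate by parts against $\Delta u$ in \eqref{Append:e1}) \emph{before} applying the two time derivatives, so that $\p_t^2$ falls on the \emph{doubly} integrated kernel $\int_y^t\int_\theta^t(t-s)^{-\varepsilon}g(s-\theta)\,ds\,d\theta$; this is effectively one derivative of the once-smoothed kernel, of size $(t-y)^{-\varepsilon}|\ln(t-y)|$, which is integrable and matches \eqref{lem:Append:e1}. Even then, the paper does not use raw bounds on $g''$: it exploits the factorization $g=\f{t^{-\alpha(t)}}{\Gamma(1-\alpha(t))}G(t)$ together with a further integration by parts in $\theta$ made possible by $\alpha(0)=0$ (the $I_1$, $I_2$ split in \eqref{Append:e2}), so that only combinations like $(s-\theta)\,\p_\theta\big[\tfrac{(s-\theta)^{-\alpha}}{\Gamma(1-\alpha)}G\big]$ appear, in which the would-be $t^{-2}$ singularity cancels (the bound \eqref{Append:K:est} reflects $g'(\tau)+\tau g''(\tau)=O(\ln^2\tau)$ in Case 1, a cancellation your termwise bounds on $g$, $g'$, $g''$ would miss). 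Your FTC splitting to isolate $\Delta u_0$, the substitution \eqref{Lem1vo:sub}, and the Beta-function estimates are all in the right spirit, but without reordering the operations and exploiting this structure, the Case 1 estimate does not go through.
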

\begin{proof}
The proof of this lemma is given in the Appendix.
\end{proof}

\begin{thm}\label{thm:utt}
Suppose Assumption A holds and that $\alpha\in W^{4,\infty}( 0, T)$,  $\Delta u_0$, $\Delta^2 u_0\in L^2$ and $f\in W^{1,p}(L^2)\cap L^p(\check H^{2+\sigma})$ for $0 < \sigma \ll 1$ and $ 1 \le p \le\infty$.
If  $ \alpha^\prime( 0) \neq 0$, then the following estimate holds for $1 \le p < \infty$
\begin{equation}\label{thm:utt:e1}
\begin{array}{l}
\ds \|u\|_{W^{2,p}(L^2)}+\|u\|_{L^\infty(\check H^2)}+\|u\|_{W^{1,p}(\check H^2)}\\[0.075in]
\ds\qquad\quad\leq Q\big( \|f\|_{W^{1,p}(L^2)} + \| f\|_{L^p(\check H^{2+\sigma})} +\|\Delta u_0\|+\|\Delta^2 u_0\|\big).
\end{array}
\end{equation}
Here  $Q=Q(p,\alpha^*,\|\alpha\|_{W^{4,\infty}},T)$.

Furthermore, if $ \alpha^\prime( 0)  = 0$, then the above estimate holds for $1 \le p \le \infty$.
\end{thm}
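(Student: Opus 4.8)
The plan is to bootstrap from the first-order regularity already established in Theorem~\ref{thm:Stab} (via Corollary~\ref{Cor:Stab}) by differentiating the solution representation \eqref{VtSol:e1} once in time and re-running a fixed-point/contraction argument in a weighted space, now for $\p_t u$ rather than $u$. Concretely, I would formally set $v := \p_t u$ and differentiate the equation \eqref{VtFDEs1}: since $\p_t u - \Delta u = f + B(t)u$ with $B(t)u = g*\Delta u$, differentiating gives $\p_t v - \Delta v = \p_t f + \p_t(B(t)u)$, with a compatible initial value $v(\bm x,0) = \Delta u_0 + f(\bm x,0) + B(0)u_0 = \Delta u_0 + f(\bm x,0)$ (using $g*\Delta u|_{t=0}=0$). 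The term $\p_t(B(t)u)$ is exactly what Lemma~\ref{lem:Append} is designed to control: in Case~1 it is bounded by a logarithmically-weighted convolution of $\p_\theta\Delta u$ plus a $|\ln t|\,t^{-\varepsilon}$-singular contribution from $\Delta u_0$. So the first step is to justify this differentiation rigorously (difference quotients, using the regularity from Theorem~\ref{thm:Stab} to pass to the limit), and to record that $v$ solves a heat equation with forcing $\p_t f + \p_t(B(t)u)$ and the stated initial data.

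Next I would handle the three pieces of $\|u\|_{W^{2,p}(L^2)}+\|u\|_{L^\infty(\check H^2)}+\|u\|_{W^{1,p}(\check H^2)}$ by applying Lemma~\ref{lem:Lp} (the maximal-$L^p$ regularity for the heat equation) to the problem for $v$, after splitting off the initial layer: write $v = E(t)(\Delta u_0 + f(0)) + \tilde v$ where $\tilde v$ solves the heat equation with zero initial data and forcing $\p_t f + \p_t(B(t)u)$. The semigroup term is handled by the smoothing estimates \eqref{E:est}: $E(t)(\Delta u_0 + f(0)) \in L^\infty(\check H^0)$ trivially, and one gains two spatial derivatives at the cost of $t^{-1}$, which is integrable in $L^p$ for $p<\infty$; the membership $f(0)\in \check H^\sigma$ (coming from $f\in W^{1,p}(L^2)\cap L^p(\check H^{2+\sigma})$ by an interpolation/trace-in-time argument) is what upgrades $\Delta u_0\in L^2$ and gives the needed $\check H^2$-bounds via $E(t)$. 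For $\tilde v$, Lemma~\ref{lem:Lp} reduces everything to estimating $\|\p_t f + \p_t(B(t)u)\|_{L^p(L^2)}$: the $\p_t f$ part is controlled by hypothesis, and for $\p_t(B(t)u)$ I would use Lemma~\ref{lem:Append} with $\varepsilon=0$, producing $Q\int_0^t |\ln(t-\theta)|\,|\p_\theta\Delta u(\bm x,\theta)|\,d\theta + Q|\ln t|\,|\Delta u_0|$, then take $L^2_{\bm x}$ and $L^p_t$ norms, bound $|\ln(t-\theta)| \le Q(t-\theta)^{-\alpha^*}$, and apply Young's convolution inequality (exactly as in the proof of Lemma~\ref{Sol:Lem1} and \eqref{mapM:e2}) together with \eqref{Model:e5} to absorb the log-singularity — but crucially this now involves $\|\p_\theta\Delta u\|_{L^p(L^2)} = \|u\|_{W^{1,p}(\check H^2)}$, i.e. the very quantity we are trying to bound.

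This self-referential structure is the heart of the argument and I expect it to be the main obstacle: one must close the estimate either by (i) a Gronwall-type argument in the variable $\bar t$, exploiting that the convolution kernel $(t-\theta)^{-\alpha^*}$ is weakly singular so that $\int_0^{\bar t}\|\cdot\|^p$ of the convolution is dominated by $\int_0^{\bar t}(\bar t - s)^{-\alpha^* p'}\cdots$ — but $\alpha^* p'$ may exceed $1$, so more likely (ii) by introducing the exponential weight $e^{-\gamma t}$ and re-running the contraction: define $v = \p_t u$ as the fixed point of the map sending $\phi \mapsto$ (solution of heat equation with forcing $\p_t f + G[\phi]$), where $G[\phi]$ is the convolution operator from Lemma~\ref{lem:Append} applied with $\p_\theta\Delta u$ replaced by $\Delta\phi$-type data, show $\gamma^{\alpha^*-1}$-smallness exactly as in \eqref{mapM:e4}--\eqref{Stab:e2}, and then remove the weight with Gronwall as in \eqref{Stab:e3}. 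Once $\|v\|_{W^{1,p}(L^2)}+\|v\|_{L^p(\check H^2)}$ is bounded, $\|u\|_{W^{2,p}(L^2)} = \|\p_t v\|_{L^p(L^2)}$ and $\|u\|_{W^{1,p}(\check H^2)} = \|v\|_{L^p(\check H^2)}$ follow immediately, and $\|u\|_{L^\infty(\check H^2)}$ follows from the embedding $W^{1,p}(0,T;L^2)\cap L^p(0,T;\check H^2)\hookrightarrow C([0,T];\check H^{2-2/p})$ refined by the $\check H^{2+\sigma}$-data on $f$ and $\Delta^2 u_0\in L^2$ (which gives $\Delta u_0\in \check H^2$, hence $E(t)\Delta u_0\in L^\infty(\check H^2)$).

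Finally, for the extension to $p=\infty$ when $\alpha'(0)=0$: in that case Lemma~\ref{lem:Append} Case~2 replaces the $|\ln(t-\theta)|$ kernel by the integrable-in-$L^\infty$ kernel $(t-\theta)^{-\varepsilon}$ and the initial-layer term $|\ln t|\,t^{-\varepsilon}|\Delta u_0|$ by $t^{-\varepsilon}|\Delta u_0|$, which for $\varepsilon$ small enough lies in $L^\infty$ — wait, $t^{-\varepsilon}$ is not in $L^\infty$ but the $\tilde v$-contribution after convolving against the heat kernel gains back regularity; more carefully, one uses $\varepsilon=0$ in the $L^\infty$ bound for the forcing (Case~2 with $\varepsilon=0$ gives $|\p_t(B(t)u)| \le Q\int_0^t|\p_\theta\Delta u|\,d\theta + Q|\Delta u_0| \in L^\infty_t$ provided $\p_t\Delta u\in L^1(L^2)\cap L^\infty$), so the forcing is genuinely in $L^\infty(L^2)$ and Lemma~\ref{lem:Lp} applies with $p=\infty$ interpreted through the $p<\infty$ bounds passed to the limit; the removal of the logarithmic weight in Gronwall still goes through since $e^{Q\gamma^p T}$ is finite for each fixed large $\gamma$. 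The only place $p=\infty$ genuinely fails in Case~1 is the $|\ln t|\,t^{-\varepsilon}|\Delta u_0|$ and $|\ln(t-\theta)|$ terms, which are not in $L^\infty_t$ even after $\varepsilon\to 0$ because of the logarithm — hence the restriction $p<\infty$ there.
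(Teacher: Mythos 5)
Your overall strategy (differentiate once in time, use Lemma \ref{lem:Append} for $\p_t(B(t)u)$, and close the self-referential estimate by an exponential weight $e^{-\gamma t}$ and $\gamma^{\varepsilon-1}$-absorption) matches the spirit of the paper's argument, and your diagnosis of why $p=\infty$ fails in Case 1 but not in Case 2 is the right one. However, there is a genuine gap in how you treat the initial layer. By recasting the problem as an initial value problem for $v=\p_t u$ with data $v(0)=\Delta u_0+f(\cdot,0)$ and splitting $v=E(t)(\Delta u_0+f(0))+\tilde v$, you must bound the semigroup term in $W^{1,p}(L^2)\cap L^p(\check H^2)$. Your claim that one "gains two spatial derivatives at the cost of $t^{-1}$, which is integrable in $L^p$ for $p<\infty$" is false: $t^{-1}\notin L^p(0,T)$ for any $p\geq 1$, so $\|E(t)\psi\|_{L^p(\check H^2)}$ and $\|\p_t E(t)\psi\|_{L^p(L^2)}$ are not controlled by $\|\psi\|_{L^2}$ alone. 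The hypotheses do not repair this: $\Delta^2 u_0\in L^2$ does \emph{not} give $\Delta u_0\in\check H^2$ (there is no boundary condition on $\Delta u_0$, so $\Delta u_0$ need not even lie in $H^2$ up to the boundary), and for $p$ close to $1$ the trace $f(\cdot,0)$ obtained from $f\in W^{1,p}(L^2)\cap L^p(\check H^{2+\sigma})$ is essentially only in $L^2$. So the term $E(t)(\Delta u_0+f(0))$ cannot be placed in the spaces your scheme needs, and the subsequent application of Lemma \ref{lem:Lp} to $\tilde v$ does not rescue the full norms $\|u\|_{W^{2,p}(L^2)}+\|u\|_{W^{1,p}(\check H^2)}$. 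Your route to $\|u\|_{L^\infty(\check H^2)}$ via the embedding into $C([0,T];\check H^{2-2/p})$ has the same defect for small $p$.

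The paper avoids this entirely by never forming the initial value problem for $\p_t u$: it differentiates the Duhamel representation $u=L_1+L_2$ twice, so that both time derivatives fall on the data as $E(t)\Delta^2 u_0$ (bounded using only $\|E(t)\|_{L^2\to L^2}\le Q$, no smoothing), and on $f$ as $\p_t f$, $\Delta f$ and the convolution $\int_0^t E'(t-s)\Delta f\,ds$, whose kernel is the integrable $(t-s)^{-1+\sigma/2}$ precisely because $f\in L^p(\check H^{2+\sigma})$ supplies the extra $\sigma$ derivatives. The nonlocal part $\p_t^2L_2$ is then handled with the resolvent representation and Lemma \ref{lem:Append} (with small $\varepsilon>0$ to tame the logarithm), and the resulting $\|e^{-\gamma t}\p_t\Delta u\|_{L^p(L^2)}$ term is absorbed by differentiating the PDE itself and taking $\gamma$ large — the part of your plan that is sound. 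If you wish to keep your formulation in terms of $v=\p_t u$, you would need either additional compatibility on the data ($\Delta u_0+f(0)$ in a trace space close to $\check H^{2-2/p}$ with the Dirichlet condition), or you must reorganize the derivatives as the paper does so that no smoothing of $E(t)$ on merely-$L^2$ initial data is required.
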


\begin{proof}
We first consider $ \alpha^\prime( 0)  \neq 0$.
We apply $E^\prime(t) = E(t) \Delta$ and \eqref{Et:Express} to directly evaluate $ \p_t^2 L_1$ in \eqref{VtSol:e1} by
\begin{equation}\label{P2L1}\begin{array}{rl}
%\p_tL_1 & \ds =\Delta u_0+f- \int_0^t\sum_{i=1}^\infty \lambda_ie^{-\lambda_i (t-s)}(\Delta u_0+ f,\phi_i)\phi_i ds, \\[0.15in]
\p_t^2L_1 & \hspace{-0.1in}\ds = E(t) \Delta^2 u_0 + \p_t f +\Delta f +\int_0^t\sum_{i=1}^\infty \lambda_i^2e^{-\lambda_i (t-s)}( f,\phi_i)\phi_i ds\\[0.1in]
\ds &  \hspace{-0.1in} \ds = E(t) \Delta^2 u_0 + \p_t f +\Delta f +\int_0^t E^\prime (t-s) \Delta f(\bm x, s) ds.
\end{array}\end{equation}
Use \eqref{E:est} to bound
\begin{equation}\label{wtt:e1}\begin{array}{rl}
\hspace{-0.2in}\ds \|\p_t^2L_1\|_{L^2(\Omega)} & \ds \hspace{-0.1in} \ds\le  \|E(t)\|_{L^2 \rightarrow L^2} \|\Delta^2 u_0\|_{L^2} + \|\p_t f\|_{L^2} + \|\Delta f\|_{L^2}  \\[0.05in]
\ds \qquad  & \ds \hspace{-0.1in} \qquad  + \int_0^t \big\| E^\prime (t-s) \Delta f(\bm x, s)\big\|_{L^2} ds\\[0.1in]
& \hspace{-0.1in} \ds \le \|\Delta^2 u_0\|_{L^2} + \|\p_t f\|_{L^2} + \|\Delta f\|_{L^2} + \int_0^t \big\| E (t-s) \Delta f(\bm x, s)\big\|_{\check H^2} ds \\[0.1in]
\ds & \hspace{-0.1in} \ds \le \|\Delta^2 u_0\|_{L^2} \!+\! \|\p_t f\|_{L^2} \!+\! \|\Delta f\|_{L^2} \!+ \! \int_0^t (t-s)^{-\f{2-\sigma}{2}} \|\Delta f(\cdot, s)\|_{\check H^\sigma}ds
\end{array}\end{equation}
for $0 < \sigma \ll 1$.
Take $\|\cdot\|_{L^p(0, T)}$ norm on both sides of the above inequality and apply Young's inequality to bound
\begin{equation}\label{wtt:e2}\begin{array}{rl}
\hspace{-0.21in}\ds \|\p_t^2 L_1\|_{L^p(L^2)}  & \hspace{-0.125in} \ds \!\le\! \|\Delta^2 u_0\|_{L^2}  \!+\! \|f\|_{W^{1,p}(L^2)}   \!+\! \|\Delta f\|_{L^p(L^2)} \!+\!  \big\|t^{-\f{2-\sigma}{2}} \!*\! \| f\|_{\check H^{2+\sigma}} \big\|_{L^p(0, T)}\\[0.1in]
\ds & \hspace{-0.125in} \ds \!\le\! \|\Delta^2 u_0\|_{L^2} + \|f\|_{W^{1,p}(L^2)} + \| f\|_{L^p(\check H^{2+\sigma})}.
\end{array}\end{equation}

We utilize the commutativity of convolution operator to obtain
$$\begin{array}{l}
\ds \p_t \int_0^{t} E^\prime(t-s)\,  \big( B(s) u (\bm x, s)  \big)ds
= \p_t \int_0^{t} E^\prime(s)  \big ( B(y) u (\bm x, y)  \big ) \big |_{y=t-s}\,ds\\[0.1in]
\ds \quad = \int_0^{t} E^\prime(s)  \p_t\big(B(y) u (\bm x, y)\big) \big |_{y=t-s}\big) ds
= -\int_0^{t} E^\prime(t-s) \p_s\big(B(s) u (\bm x, s) \big) ds.
\end{array}$$
Differentiate $L_2$ in \eqref{VtSol:e1} with respect to $t$ twice and use above relation to obtain
\begin{equation}\begin{array}{l}\label{wtt:e3}
\ds \p_t L_2 = \int_0^{t} E^\prime(t-s)\, \big( B(s) u (\bm x, s)  \big)ds +B(t) u(\bm x,t),\\[0.125in]
\ds \p_t^2 L_2 = -\int_0^{t} E^\prime(t-s) \p_s\big(B(s) u (\bm x, s) \big) ds + \p_t\big(B(t) u(\bm x,t)\big).
\end{array}
\end{equation}
We use the estimate \eqref{lem:Append:e1} with $\varepsilon=0$, that is,
 \begin{equation}\begin{array}{l}\label{wtt:E3}
   \ds \big|\p_t(B(t)u)\big| \le Q \int_0^t \big|\ln(t-\theta)\big| \big|\p_\theta \Delta u\big| d \theta + Q |\ln(t)| |\Delta u_0|
   \end{array}
 \end{equation}
 combined with $\big|\ln(t) \big| \le Q t^{-\varepsilon}$ for $0 < \varepsilon \ll 1$ to directly bound the second term on the right hand side of the second equality in \eqref{wtt:e3}   and follow the preceding procedures in Lemma \ref{Sol:Lem1} to reformulate
$$\begin{array}{l}
\ds \int_0^{t} E^\prime(t-s)  \p_s\big (B(s) u (\bm x, s)\big) ds \\[0.05in]
\ds \quad = \int_0^t\bigg[\frac{1}{2\pi\rm i} \int_{\Gamma_\theta}z^{1-\varepsilon} (z-\Delta)^{-1} e^{z(t-s)} d z\bigg]\big({}^R\partial_s^\varepsilon \p_s(B(s) u (\bm x, s)\big)ds. \end{array}$$
We  invoke  (\ref{GammaEstimate}) to bound the integral in the square brackets, and then  incorporate   the estimate \eqref{lem:Append:e1} to bound the first term on the right-hand side of $\p_t^2 L_2$  in \eqref{wtt:e3}
\begin{equation}\label{wtt:e4}\begin{array}{l}
\hspace{-0.15in}\ds \bigg \| \int_0^{t} E^\prime(t-s)  \p_s\big (B(s) u (\bm x, s)\big) ds\bigg\|_{L^2}\leq Q\int_0^t \f{\big \|{}^R\partial_s^\varepsilon \p_s(B(s) u(\bm x,s)) \big \|_{L^2} ds}{(t-s)^{1-\varepsilon}}\\[0.1in]
\hspace{-0.1in} \ds\leq Q\int_0^t \f1{(t-s)^{1-\varepsilon}} \Big(\int_0^s \frac{|\ln(s-\theta)|}{(s-\theta)^{\varepsilon}} \|\partial_{\theta} \Delta u(\cdot,\theta)\|_{L^2} d\theta + \|\Delta u_0\| s^{-\varepsilon} |\ln(s)| \Big)d s\\[0.15in]
\hspace{-0.1in}\ds\leq Q\int_0^t \f1{(t-s)^{1-\varepsilon}} \Big(\int_0^s \frac{\|\partial_{\theta} \Delta u(\cdot,\theta)\| _{L^2} }{(s-\theta)^{2\varepsilon}} d\theta + \|\Delta u_0\| s^{-2\varepsilon}\Big) d s\\[0.15in]
\hspace{-0.1in}\ds\leq Q\int_0^t\frac{\|\partial_{\theta} \Delta u(\cdot,\theta)\| _{L^2}}{(t-\theta)^{\varepsilon}}d\theta+ Q \|\Delta u_0\|t^{-\varepsilon}.
\end{array}\end{equation}

Fix  $1 \le p < \infty$, and we can then choose $ 0 < \varepsilon \ll 1$ such that $0 < \varepsilon p <1 $.  we multiply $\p_t^2 L_2$ in \eqref{wtt:e3} by $e^{-\gamma t}$   and take the $\|\cdot\|_{L^p(0,T)}$ on both sides of the resulting equation and then invoke \eqref{Model:e5}, \eqref{wtt:E3}, \eqref{wtt:e4} and Young's convolution inequality to obtain
\begin{equation}\label{wtt:L2}\begin{array}{l}
\ds \big \|e^{-\gamma t} \p_t^2 L_2 \big \|_{L^p(L^2)}  \leq Q \big \|\big(e^{-\gamma t}t^{-\varepsilon}\big)*\big(e^{-\gamma t}\|\p_t \Delta u(\cdot,t)\| \big) \big \|_{L^p(0,T)} +Q\|\Delta u_0\| \\[0.1in]
\qquad \qquad \qquad \qquad\ds \leq Q\gamma^{\varepsilon-1}\|e^{-\gamma t}\p_t \Delta u\|_{L^p(L^2)}+Q \|\Delta u_0\|.
\end{array}\end{equation}
We differentiate \eqref{VtSol:e1} twice in time, apply the $\|\cdot\|_{L^p(0,T)}$ norm on both sides of the resulting equation multiplied by $e^{-\gamma t}$ and invoke \eqref{wtt:e2} and \eqref{wtt:L2} to obtain
\begin{equation}\label{wtt:e5}\begin{array}{l}
\hspace{-0.2in}\ds \big \|e^{-\gamma t} \p_t^2 u \big \|_{L^p(L^2)}  \ds \leq \big \|e^{-\gamma t}\p_t^2L_1 \big \|_{L^p(L^2)}
+\big \|e^{-\gamma t}\p_t^2L_2 \big \|_{L^p(L^2)} \\[0.075in]
\hspace{-0.2in} \ds\leq  Q\big( \|f\|_{W^{1,p}(L^2)} \!+\! \| f\|_{L^p(\check H^{2+\sigma})} \!+\!\|\Delta u_0\|\!+\!\|\Delta^2 u_0\|\big) \!+\! Q\gamma^{\varepsilon-1} \big \| e^{-\gamma t}\p_t \Delta u \big \|_{L^p(L^2)}.
\end{array}\end{equation}
We differentiate  \eqref{VtFDEs1} with respect to $t$,  apply the $\|\cdot\|_{L^p(0,T)}$ norm on both sides of the resulting equation multiplied by $e^{-\gamma t}$, and combine  the estimates \eqref{wtt:E3}, \eqref{wtt:e5} with Young's convolution inequality to bound
\begin{equation*}\begin{array}{rl}
\big \|e^{-\gamma t} \p_t \Delta u \big \|_{L^p(L^2)} & \hspace{-0.1in}\ds \leq \big \| e^{-\gamma t} \big(\p_t^2 u- \p_t f -\p_t(B(t)u) \big)\big \|_{L^p(L^2)}\\[0.1in]
& \hspace{-0.1in}\ds \leq  Q\big( \|f\|_{W^{1,p}(L^2)} + \| f\|_{L^p(\check H^{2+\sigma})} +\|\Delta u_0\|+\|\Delta^2 u_0\|\big)\\[0.1in]
& \hspace{-0.1in}\ds \quad + Q\gamma^{\varepsilon-1} \big \| e^{-\gamma t}\p_t \Delta u \big \|_{L^p(L^2)}.
\end{array}\end{equation*}
We set $\gamma$ large enough to hide the last term on the right-hand side to get
\begin{equation}\label{wtt:e6}
\hspace{-0.1in}\ds \big \| \p_t \Delta u\big \|_{L^p(L^2)}\!=\! \big \| \p_t  u\big \|_{L^p(\check H^2)}\leq   Q\big( \|f\|_{W^{1,p}(L^2)} \!+\! \| f\|_{L^p(\check H^{2+\sigma})} \!+\!\|\Delta u_0\|+\|\Delta^2 u_0\|\big),
\end{equation}
which combined with   \eqref{wtt:e5}, yields that
\begin{equation}\label{wtt:e7}\begin{array}{rl}
\big \|\p_t^2 u \big \|_{L^p(\check H^2)} \le Q\big( \|f\|_{W^{1,p}(L^2)} + \| f\|_{L^p(\check H^{2+\sigma})} +\|\Delta u_0\|+\|\Delta^2 u_0\|\big).
\end{array}\end{equation}
We finally employ the estimate \eqref{wtt:e6} and  $\|u\|_{L^\infty(\check H^2)} \le Q  \|   u \|_{W^{1,p}(\check H^2)}$ by the Sobolev embedding $W^{1,p}(0, T) \hookrightarrow  L^\infty(0, T)$ to arrive at the estimate for  $\|u\|_{L^\infty(\check H^2)}$ in \eqref{thm:utt:e1}.

If $ \alpha^\prime( 0) = 0$,  the estimates \eqref{wtt:E3} and \eqref{wtt:e4} could be augmented by the estimate \eqref{lem:Append:e2}  as follows
 \begin{equation}\label{wtt:e8}
 \begin{array}{l}
 \ds \big|\p_t(B(t)u)\big| \le Q \int_0^t \big|\p_\theta \Delta u\big| d \theta + Q |\Delta u_0|,\\[0.1in]
 \ds \ds \bigg \| \int_0^{t} E^\prime(t-s)  \p_s\big (B(s) u (\bm x, s)\big) ds\bigg\|_{L^2} \leq Q\int_0^t \|\partial_{\theta} \Delta u(\cdot,\theta)\| _{L^2} d\theta+ Q \|\Delta u_0\|,
 \end{array}
 \end{equation}
from which we observe that there is no singularity for the right hand side terms. We then follow the similar derivations to arrive at the estimates \eqref{thm:utt:e1} for $1 \le p \le \infty$ and thus complete the proof of the theorem.
\end{proof}

%\begin{cor}\label{Cor:utt}
%Suppose Assumption A holds and that $\alpha\in W^{4,\infty}( 0, T)$,  $\Delta u_0, \Delta^2 u_0\in L^2$ and $f\in W^{1,p}(L^2)\cap L^p(\check H^{2+\sigma})$ for $0 < \sigma \ll 1$ and $ 1 \le p \le\infty$.
%If  $ \alpha^\prime( 0) \neq 0$, then the solution $u$ to the multiscale diffusion model  \eqref{VtFDEs} has the  following estimate for $1 \le p < \infty$
%\begin{equation}\label{cor:utt:e1}
%\begin{array}{l}
%\ds \|u\|_{W^{2,p}(L^2)}+\|u\|_{L^\infty(\check H^2)}+\|u\|_{W^{1,p}(\check H^2)}\\[0.075in]
%\ds\qquad\quad\leq Q\big( \|f\|_{W^{1,p}(L^2)} + \| f\|_{L^p(\check H^{2+\sigma})} +\|\Delta u_0\|+\|\Delta^2 u_0\|\big).
%\end{array}
%\end{equation}
%Here  $Q=Q(p,\alpha^*,\|\alpha\|_{W^{4,\infty}},T)$.
%
%Furthermore, if $ \alpha^\prime( 0)  = 0$, then the above estimate holds for $1 \le p \le \infty$.
%\end{cor}
%\begin{proof}
%The regularity results follow from that for the problem \eqref{VtFDEs1} by Theorem \ref{Model} and Theorem \ref{thm:utt}.
%\end{proof}

%\section{Analysis of tFDE \eqref{VtFDEs}}
% We finally arrive at the well-posedness and smoothing properties of  the original tFDE \eqref{VtFDEs} based on Theorems \ref{thm:Stab} and \ref{thm:utt}.
%
%

\section{Numerical computation}\label{S:Nume}
\subsection{Numerical discretization}
Partition $[0,T]$ by $t_n := n\tau$ for $\tau := T/N$ and $0\leq n\leq N$. Define a quasi-uniform partition of $\Omega$ with mesh diameter $h$ and let $S_h$ be the space of continuous and piecewise linear functions on $\Omega$ with respect to the partition. Let $I$ be the identity operator. The Ritz projection $\Pi_h:H^1_0(\Omega)\rightarrow S_h(\Omega)$ defined by
$\big(\nabla(g-\Pi_h g),\nabla \chi\big)=0$ for any $ \chi\in S_h$ has the approximation property \cite{Tho}
\begin{equation}\label{Ritz:e2}
\big \| (I - \Pi_h) g \big \| \leq Q h^2 \|g\|_{H^2}, \quad \forall g\in H^2(\Omega) \cap H_0^1(\Omega).
\end{equation}

Let $u_n:=u(\bm x,t_n)$ and $f_n:=f(\bm x,t_n)$. We discretize $\p_t u$ and $g* \Delta u$ with $g$ defined in \eqref{Lem0:e1} at $t=t_n$ for $1\leq n\leq N$ by
\begin{equation}\label{FEMdist}\begin{array}{rl}
\ds \p_t u \big|_{t=t_n} & \hspace{-0.1in}\ds = \delta_{\tau} u_n + E_n
:= \frac{u_n - u_{n-1}}{\tau} + \f1{\tau}\int_{t_{n-1}}^{t_n} \p_t^2 u(\bm x,t)(t-t_{n-1})dt, \\[0.15in]
 \ds g* \Delta u \big|_{t=t_n} &\hspace{-0.1in}\ds=  \sum_{k=1}^n \int_{t_{k-1}}^{t_k} g(t_n-s)\Delta u(\bm x,s) ds \\[0.175in]
&\hspace{-0.1in}\ds = \sum_{k=1}^n \int_{t_{k-1}}^{t_k} \! \f{(t_n-s)^{-\alpha(t_n-s)} G( t_n-s) }{\Gamma(1-\alpha(t_n-s))}\Delta u(\bm x,s) ds \\[0.175in]
&\hspace{-0.1in}\ds= I_{\tau}^{g( t_n)} \Delta u_n + J_n + \hat J_n.
\end{array}\end{equation}
Here $I_{\tau}^{g( t_n)} u_n$ is defined by
 \begin{equation}\label{dis:frac}\begin{array}{ll}
\hspace{-0.15in}&  \hspace{-0.125in}\ds \ds I_{\tau}^{g(t_n)}\Delta u_n := \sum_{k=1}^n \int_{t_{k-1}}^{t_k} \! \f{(t_n-s)^{-\alpha(t_n-t_k)} \breve G_{n,k}( s) }{\Gamma(1-\alpha(t_n-t_k))}\Delta u(\bm x,t_k) ds  = \sum_{k=1}^n  b_{n,k} \Delta u_k,\\[0.175in]
\hspace{-0.15in}& \hspace{-0.125in} \ds \ds \ds  \breve G_{n,k}( s)  := - \alpha^\prime( t_n-t_k) \ln(t_n-s) +R_{n,k} ,\\[0.175in]
\hspace{-0.15in}& \hspace{-0.125in} \ds R_{n,k}:= -\f{\alpha(t_n-t_k)}{t_n-t_k}   \ds + \f{\Gamma^\prime(1-\alpha(t_n-t_k))\alpha^\prime( t_n-t_k)}{\Gamma(1-\alpha( t_n-t_k))},\\[0.175in]
\hspace{-0.15in}& \hspace{-0.125in} \ds b_{n,k}  \ds := \int_{t_{k-1}}^{t_k} \! \f{(t_n-s)^{-\alpha(t_n-t_k)} \breve G_{n,k}( s) }{\Gamma(1-\alpha(t_n-t_k))} ds  = \f{-\alpha^\prime( t_n-t_k) b_{n,k}^1+{R_{n,k}} b_{n,k}^2}{\Gamma(1-\alpha(t_n-t_k))}.
\end{array}
\end{equation}
By {\it Assumption A}, we find that
$$
\begin{array}{ll}
  \ds \lim\limits_{t\rightarrow0} \frac{\alpha(t)}{t} = \alpha'(0), \quad \frac{\Gamma'(\kappa)}{\Gamma(\kappa)} = - \gamma_{e} + \int_{0}^{1}\frac{1-t^{\kappa-1}}{1-t}dt, \quad  0< \kappa \leq 1
\end{array}
$$
with $\frac{\Gamma'(1)}{\Gamma(1)}=-\gamma_{e}$ and $\gamma_{e}\approx 0.577$ denoting the Euler's constant, which, combined with {\it Assumption A}, ensures the boundedness of $R_{n,k}$ in  \eqref{dis:frac}. In addition,  $ b_{n,k}^1 $ and $ b_{n,k}^2$ in \eqref{dis:frac} are expressed as
$$
\begin{array}{ll}
\hspace{-0.15in}& \hspace{-0.125in}\ds b_{n,k}^1 : = \int_{t_{k-1}}^{t_k}  \f{\ln(t_n-s)}{(t_n-s)^{\alpha(t_n-t_{k})}}  ds  = \f{(t_n-t_{k-1})^{1-\alpha(t_n-t_k)}}{1-\alpha(t_n-t_k)}\Big(\ln(t_n-t_{k-1})\\[0.15in]
\hspace{-0.15in}& \hspace{-0.125in}\ds \quad  -\big(1-\alpha(t_n-t_{k})\big)^{-1}\Big)   - \f{(t_n-t_{k})^{1-\alpha(t_n-t_k)}}{1-\alpha(t_n-t_k)}\Big(\ln(t_n-t_{k}) - \big(1-\alpha(t_n-t_{k})\big)^{-1}\Big), \\[0.15in]
\hspace{-0.15in}& \hspace{-0.125in}\ds b_{n,k}^2 : = \int_{t_{k-1}}^{t_k}  (t_n-s)^{-\alpha(t_n-t_k)} ds = \f{(t_n-t_{k-1})^{1-\alpha(t_n-t_k)}-(t_n-t_{k})^{1-\alpha(t_n-t_k)}}{1-\alpha(t_n-t_k)},
\end{array}
$$
and $\hat J_n$ and $J_n$ in \eqref{dis:frac} are defined as follows
 \begin{equation}\label{dis:err}\begin{array}{rl}
\hspace{-0.1in} \ds \hat J_n &\ds := \sum_{k=1}^n \int_{t_{k-1}}^{t_k} \bigg [ g(t_n-s) - \f{(t_n-s)^{-\alpha(t_n-t_k)}}{\Gamma(1-\alpha(t_n-t_k))}   \breve G_{n,k}(s) \bigg ]\Delta u(\bm  x,s) ds,\\[0.15in]
\hspace{-0.1in} J_n &\ds :=\sum_{k=1}^n J_{n,k}:= \sum_{k=1}^n \int_{t_{k-1}}^{t_k} \frac{\big(\Delta u(\bm  x,s) - \Delta u_k \big) \breve G_{n,k}( s) ds}{\Gamma(1-\alpha(t_n-t_k))(t_n-s)^{\alpha(t_n-t_k)}} \\[0.175in]
&  \ds =  \sum_{k=1}^n \int_{t_{k-1}}^{t_k}\frac{\int_{t_k}^{s} \p_\theta \Delta u(\bm  x,\theta) d \theta  \breve G_{n,k}( s) }{\Gamma(1-\alpha(t_n-t_k))(t_n-s)^{\alpha(t_n-t_k)}}ds.
\end{array}\end{equation}
We plug these discretizations into (\ref{VtFDEs1}), and integrate the resulting equation multiplied by $\chi\in H_0^1(\Omega)$ on $\Omega$ to obtain its weak formulation  for any $\chi \in H_0^1(\Omega)$ and $n=1,2,\ldots,N$
\begin{equation}\label{WeakForm}\begin{array}{l}
\ds\hspace{-0.1in} (\delta_{\tau}  u_n,\chi)+ \big(\nabla u_n,\nabla \chi\big) + \big(I_{\tau}^{g(t_n)} \nabla u_n, \nabla\chi\big) =  (f_n,\chi)-\big( \hat J_n+J_n+E_n,\chi\big).
\end{array} \end{equation}
Drop the local truncation error terms in \eqref{WeakForm} to obtain a fully-discrete finite element scheme for (\ref{VtFDEs1}): find $U_n \in S_h$ with $U_0 := \Pi_h u_0$ such that for $n=1,\cdots,N$ and $\chi \in S_h$
\begin{equation}\label{FEM}
(\delta_{\tau}  U_n,\chi) +\big(\nabla U_n,\nabla \chi\big) +  \big(I_{\tau}^{g(t_n)} \nabla U_n,\nabla \chi\big)=  (f_n,\chi),~~\forall \chi \in S_h.
\end{equation}
\subsection{Error estimate}
We subtract equation \eqref{FEM} from equation \eqref{WeakForm} to obtain the following error equation  for $U_n-u_n$
\begin{equation}\label{ErrEq}\begin{array}{l}
\ds\hspace{-0.1in} (\delta_{\tau} (U-u)_n,\chi)+ \big(\nabla (U-u)_n,\nabla \chi\big) + \big(I_{\tau}^{g( t_n)} \nabla(U-u)_n,\nabla\chi\big) \\[0.1in]
\ds\qquad \qquad = \big( \hat J_n+J_n+E_n,\chi\big), \quad \forall \chi \in S_h.
\end{array} \end{equation}

Let $\Pi_h u$ be the Ritz projection of $u$ and $\eta: = \Pi_h u - u$ be bounded in \eqref{Ritz:e2}. We decompose the error $U_n - u_n = \xi_n + \eta_n$ and remain to bound $\xi_n := U_n - \Pi_h u_n \in S_h$.  We rewrite the error equation \eqref{ErrEq}  as follows
\begin{equation}\label{ErrEq:e1}\begin{array}{l}
\hspace{-0.1in} \ds (\delta_{\tau} \xi_n,\chi) +  \big(\nabla\xi_n,\nabla \chi\big) + \big(I_{\tau}^{g(t_n)} \nabla \xi_n,\nabla\chi\big) \ds= \big(\hat J_n+J_n + E_n - \delta_{\tau} \eta_n,\chi\big).
\end{array}\end{equation}

\begin{thm}\label{thm:Error} Suppose Assumption A holds and that $\alpha\in W^{4,\infty}( 0, T)$,  $\Delta u_0$, $\Delta^2 u_0\in L^2$ and $f\in W^{1,1}(L^2)\cap L^1(\check H^{2+\sigma})$ for $0 < \sigma \ll 1$. Then an optimal-order error estimate holds for the fully-discrete finite element scheme \eqref{FEM} for $ 0 < \tau \le \tau_0$ and for some $\tau_0>0$
\begin{equation}\label{thm:Err1}
\| U - u\|_{\hat L^\infty(L^2)}:=\max_{1\leq n\leq N}\|U_n-u_n\|\leq QM \big(\tau +h^2\big).
\end{equation}
Here $M :=  \|f\|_{W^{1,1}(L^2)} + \| f\|_{L^1(\check H^{2+\sigma})} +\|\Delta u_0\|+\|\Delta^2 u_0\|$, and $Q$ is independent of $f$, $u_0$, $h$, $\tau$, $N$.

\end{thm}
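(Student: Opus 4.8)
The plan is the classical split‑test‑and‑bound route for finite element error analysis, the nonlocal memory term being the only genuine novelty. Write $U_n-u_n=\xi_n+\eta_n$, where $\eta_n:=\Pi_h u_n-u_n$ and $\xi_n:=U_n-\Pi_h u_n\in S_h$. The projection part is harmless: by \eqref{Ritz:e2} together with the regularity $\|u\|_{L^\infty(\check H^2)}\le QM$ supplied by Theorem~\ref{thm:utt} with $p=1$ (admissible since $\alpha'(0)$ is unrestricted there, and the hypotheses here are exactly those of Theorem~\ref{thm:utt}), one gets $\max_n\|\eta_n\|\le Qh^2\|u\|_{L^\infty(\check H^2)}\le QMh^2$. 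So the task reduces to bounding $\max_n\|\xi_n\|$ through the error equation \eqref{ErrEq:e1}, whose right‑hand data are $\hat J_n$, $J_n$, $E_n$ and $\delta_\tau\eta_n$.

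First I would estimate those four terms in the $\tau$‑summed $L^2$ norm. For the time‑stepping remainder, $\|E_n\|\le\int_{t_{n-1}}^{t_n}\|\p_t^2u\|\,dt$, hence $\tau\sum_{n\le m}\|E_n\|\le\tau\|\p_t^2u\|_{L^1(L^2)}\le Q\tau M$ by Theorem~\ref{thm:utt}; for the projection remainder, $\|\delta_\tau\eta_n\|\le\f1\tau\int_{t_{n-1}}^{t_n}\|(I-\Pi_h)\p_tu\|\,dt$ yields $\tau\sum_{n\le m}\|\delta_\tau\eta_n\|\le Qh^2\|u\|_{W^{1,1}(\check H^2)}\le QMh^2$. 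The quadrature remainders $\hat J_n$, $J_n$ are where the memory kernel enters: on $[t_{k-1},t_k]$ the pointwise discrepancy $g(t_n-s)-(t_n-s)^{-\alpha(t_n-t_k)}\breve G_{n,k}(s)/\Gamma(1-\alpha(t_n-t_k))$ is $O(\tau)$ times a kernel which, by Lemma~\ref{Sol:Lem0} ($|g(t)|\le Q|\ln t|$) and $|\ln r|\le Q_\varepsilon r^{-\varepsilon}$, is dominated by $(t_n-s)^{-\alpha^*-\varepsilon}$ when $k<n$ and by $|\ln(t_n-s)|$ when $k=n$ (there $(t_n-s)^{-\alpha(0)}\equiv1$ because $\alpha(0)=0$), while $\|\Delta u(s)-\Delta u_k\|\le\int_{t_{k-1}}^{t_k}\|\p_\theta\Delta u\|\,d\theta$; summing the resulting discrete Abel kernel against $\|u\|_{L^\infty(\check H^2)}$ and $\|u\|_{W^{1,1}(\check H^2)}$ gives $\tau\sum_{n\le m}(\|\hat J_n\|+\|J_n\|)\le Q\tau(1+|\ln\tau|)M\le QM\tau$ once $\tau\le\tau_0$, the stray $|\ln\tau|$ being absorbed into the $O(\tau)$ rate. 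Thus all four data contribute $O(M(\tau+h^2))$.

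The core is the energy estimate for $\xi_n$. Testing \eqref{ErrEq:e1} with $\chi=\xi_n$ and using $(\delta_\tau\xi_n,\xi_n)\ge\f1{2\tau}(\|\xi_n\|^2-\|\xi_{n-1}\|^2)$ produces
$$\f1{2\tau}\big(\|\xi_n\|^2-\|\xi_{n-1}\|^2\big)+(1+b_{n,n})\|\nabla\xi_n\|^2+\sum_{k=1}^{n-1}b_{n,k}(\nabla\xi_k,\nabla\xi_n)\le(R_n,\xi_n),\quad R_n:=\hat J_n+J_n+E_n-\delta_\tau\eta_n.$$
Because $\alpha(0)=0$, the diagonal weight obeys $|b_{n,n}|\le Q\tau|\ln\tau|\to0$, so for $\tau\le\tau_0$ the term $(1+b_{n,n})\|\nabla\xi_n\|^2\ge\tfrac12\|\nabla\xi_n\|^2$ (this is also what makes the scheme \eqref{FEM} uniquely solvable), and $\sum_k|b_{n,k}|\le Q$ uniformly, since $|b_{n,k}|$ is comparable to $\int_{t_{k-1}}^{t_k}|g|$ and $g\in L^1(0,T)$ by Lemma~\ref{Sol:Lem0}.

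The main obstacle is the off‑diagonal coupling $\sum_{k<n}b_{n,k}(\nabla\xi_k,\nabla\xi_n)$: after Young's inequality it becomes $\sum_{k<n}|b_{n,k}|\|\nabla\xi_k\|^2$, whose total mass $\sup_k\sum_{n>k}|b_{n,k}|\approx\|g\|_{L^1(0,T)}$ is $O(1)$, not $o(1)$, so the $\tau\sum_n\|\nabla\xi_n\|^2$ budget on the left does not by itself absorb it and a plain discrete Gronwall argument does not close. This is exactly the difficulty flagged in the introduction — the composition $\p_t^{\alpha(t)}\Delta$ forbids treating the memory as a mere perturbation — and I would circumvent it as in \S\ref{S:Well}: run the estimate in an exponentially weighted time norm $e^{-\gamma t_n}\xi_n$ (equivalently, iterate over short subintervals of length $T_*$ chosen so that $\|g\|_{L^1(0,T_*)}$ falls below an absolute constant, feeding the already‑controlled history in as data on each subinterval), which turns the memory contribution into a genuine contraction at the price of a harmless factor $e^{\gamma T}$ absorbed into $Q$. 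Once the memory term is absorbed, multiplying by $2\tau$, summing in $n$, bounding $2\tau\sum_n(R_n,\xi_n)\le\varepsilon\max_n\|\xi_n\|^2+\varepsilon^{-1}\big(\tau\sum_n\|R_n\|\big)^2$, hiding $\varepsilon\max_n\|\xi_n\|^2$, and applying the discrete Gronwall inequality to the residual lower‑order terms yields $\max_n\|\xi_n\|\le QM(\tau+h^2)$; combined with the bound on $\eta_n$ this gives \eqref{thm:Err1}. The bulk of the technical work lies in this memory‑term estimate and in the careful treatment of the weakly singular quadrature kernels in $\hat J_n$ and $J_n$.
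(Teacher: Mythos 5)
Your proposal follows essentially the same route as the paper: the Ritz splitting $U_n-u_n=\xi_n+\eta_n$, the error equation \eqref{ErrEq:e1}, an exponentially weighted energy argument (the paper's Step 1, yielding the stability bound \eqref{Xi:e1} with the weight producing the small factor $\nu^{(\alpha^*-1)/2}$ that absorbs the memory term), and $\hat L^1(L^2)$ bounds on $E$, $J$, $\hat J$ and $\delta_\tau\eta$ via the regularity of Theorem \ref{thm:utt} with $p=1$. The only blemish is the chain $Q\tau(1+|\ln\tau|)M\le QM\tau$, which is false as written; but the logarithm never survives if, as in the paper (and as you yourself propose elsewhere), one dominates $|\ln(t_n-s)|$ by $(t_n-s)^{-(1-\alpha^*)/2}$ inside the convolution sums, so the stated $O(\tau+h^2)$ bound stands.
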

\begin{proof}
 For a fixed $\nu > 0$, define $\hat{\xi}_n := e^{- \nu t_n} \xi_n$ for $1 \le n \le N$. Multiply $e^{-\nu t_n}$ on both sides of \eqref{ErrEq:e1} and set $\chi = \hat{\xi}_n$  to obtain
 \begin{equation}\label{ErrEq:S1:e2}\begin{array}{l}
\ds \big (e^{- \nu t_n} \delta_{\tau} {\xi}_{n}, \hat{\xi}_{n} \big) + \big (\nabla \hat{\xi}_{n},\nabla \hat{\xi}_{n}\big)    +  \big( e^{-\nu t_n}I_{\tau}^{g(t_n)} \nabla \xi_n,\nabla \hat{\xi}_n\big) =  (G_n, \hat{\xi}_n)
\end{array}\end{equation}
with $G_n:= e^{-\nu t_n}(\hat J_n+J_n + E_n - \delta_{\tau} \eta_n )$.

We prove the theorem in two steps.

\paragraph{Step 1: Stability estimate of $\xi_n$} There is a constant $Q_0$ such that
\begin{equation}\label{Xi:e1}
\| \xi \|_{\hat L^\infty(L^2)} \leq Q_0 \| G \|_{\hat L^1(L^2)}, \qquad \| G \|_{\hat L^1(L^2)} := \tau \sum_{n=1}^N \|G_n\|,
\end{equation}
where $Q_0$ is independent of $u_0$, $f$, $\tau$, $N$, or $h$.

 We employ the relations
\begin{equation}\begin{array}{l}\label{ErrEq:S1:e3}
 \hspace{-0.1in} \ds e^{-\nu t_n} \delta_\tau \xi_n
  = \tau^{-1}\big[e^{-\nu t_n}\xi_n- e^{-\nu t_{n-1}}\xi_{n-1}  + e^{-\nu t_{n-1}}\xi_{n-1} - e^{-\nu t_n}\xi_{n-1}\big]\\[0.1in]
  \ds  \qquad \qquad = \delta_\tau \hat{\xi}_n +  \tau^{-1}(1-e^{-\nu \tau})\hat{\xi}_{n-1}=\tau^{-1} (\hat \xi_n - e^{-\nu \tau}\hat \xi_{n-1} ),
  \end{array}
  \end{equation}
 and
\begin{equation}\begin{array}{lr}\label{ErrEq:S1:e4}
\hspace{-0.1in}  \ds e^{-\nu t_n}   \big(  I_{\tau}^{g(t_n)} \nabla \xi_n, \nabla \hat{\xi}_{n}\big)  \ds= e^{-\nu t_n}  \sum_{k=1}^n  b_{n, k} ( \nabla \xi_k,\nabla\hat{\xi}_{n})\\[0.15in]
 \hspace{-0.1in}   \ds\ds= \sum_{k=1}^n \int_{t_k-1}^{t_k} \frac{ e^{-\nu (t_n-t_k)}   \breve G_{n,k}(s)   ds}{\Gamma(1-\alpha(t_n-t_k)) (t_n-s)^{\alpha(t_n-t_k)}} ( \nabla\hat{\xi}_{k}, \nabla\hat{\xi}_{n})  \\[0.15in]
 \hspace{-0.1in} \ds = : \sum_{k=1}^n  \hat{b}_{n,k} ( \nabla\hat{\xi}_{k}, \nabla\hat{\xi}_{n}),\\[0.175in]
\hspace{-0.1in}     \ds  \hat{b}_{n,k} \ds: =  \int_{t_k-1}^{t_k}  \frac{ e^{-\nu (t_n-t_k)} \breve G_{n,k}(s)   ds}{\Gamma(1-\alpha(t_n-t_k)) (t_n-s)^{\alpha(t_n-t_k)} } , \quad 1 \le k \le n \le N
\end{array}
\end{equation}
by \eqref{dis:frac} to reformulate  \eqref{ErrEq:S1:e2} multiplied by $\tau$ as
\begin{equation}\label{ErrEq:S1:e5}\begin{array}{l}
\hspace{-0.175in}\ds \big ( \hat{\xi}_{n}, \hat{\xi}_{n} \big) + \tau\big (\nabla \hat{\xi}_{n},\nabla \hat{\xi}_{n}\big)    +   \tau \sum_{k=1}^n  \hat{b}_{n, k} ( \nabla \hat{\xi}_k, \nabla \hat{\xi}_n) \!=\! e^{-\nu \tau}( \hat{\xi}_{n-1} , \hat{\xi}_{n}) + \tau (G_n, \hat{\xi}_n).
\end{array}\end{equation}
We then apply Cauchy's inequality and geometric-arithmetic inequality to cancel $\| \hat \xi_n \|^2/2$   on both sides and sum the resulting inequality multiplied by $2$ from $n=1$ to $n_*$ for $1 \le n_* \le N$   to obtain
\begin{equation}\label{ErrEq:S1:e6}\begin{array}{l}
\hspace{-0.175in} \ds \| \hat \xi_{n_*}\|^2  +2  \tau  \sum_{n=1}^{n_* } \|\nabla \hat \xi_n\|^2 \le   \tau  \sum_{n=1}^{n_* } \sum_{k=1}^n \big| \hat b_{n, k} \big|\|\nabla \hat \xi_k\|^2 + \tau  \sum_{n=1}^{n_* } \sum_{k=1}^n \big| \hat b_{n, k}\big|  \|  \nabla \hat \xi_n\|^2  \\[0.1in]
\ds \qquad \qquad  \qquad \qquad  \qquad   + 2 \tau \sum_{n=1}^{n_*}  \|G_n\| \|\hat \xi_n\|.
\end{array}
\end{equation}
To bound the first two terms on the right hand side of \eqref{ErrEq:S1:e6}, we firstly incorporate \eqref{ErrEq:S1:e4} to  derive the following estimate for $\hat b_{n,k}$ by applying the estimates in \eqref{Lem0:e3} to $\breve G_{n,k}(s)$ defined in \eqref{dis:frac}
\begin{equation}\label{ErrEq:S1:bnk}\begin{array}{l}
\hspace{-0.2in}\ds \big| \hat b_{n, k} \big| \ds \le  Q \int_{t_{k-1}}^{t_k} \frac{ e^{-\nu (t_n-t_k)}\big |\breve G_{n,k}(s)\big|  ds}{(t_n-s)^{\alpha^*} }  \le  Q\int_{t_{k-1}}^{t_k}  \frac{ e^{-\nu (t_n-t_k)} |\ln(t_n-s)|  ds}{(t_n-s)^{\alpha^*} } \\[0.15in]
 \ds \ds \le  Q\int_{t_{k-1}}^{t_k}  \frac{ e^{-\nu (t_n-t_k)} ds}{(t_n-s)^{(\alpha^*+1)/2} } \le   Q e^{\nu \tau} \int_{t_{k-1}}^{t_k} \frac{ e^{-\nu (t_n-s)} ds}{(t_n-s)^{(\alpha^*+1)/2} }, \,\, 1 \le k \le n \le N,
\end{array}
\end{equation}
by which we bound the first term on the right-hand side of \eqref{ErrEq:S1:e6} as follows
\begin{equation}\label{ErrEq:S1:sum1}\begin{array}{l}
\hspace{-0.1in}\ds \sum_{n=1}^{n_* } \sum_{k=1}^n \big| \hat b_{n, k} \big|\|\nabla \hat \xi_k\|^2 \le Q e^{\nu \tau} \sum_{n=1}^{n_* } \sum_{k=1}^n \int_{t_k-1}^{t_k} \frac{ e^{-\nu (t_n-s)} ds}{(t_n-s)^{(\alpha^*+1)/2} } \|\nabla \hat \xi_k\|^2\\[0.15in]
\ds \qquad \qquad \qquad \quad  = Q e^{\nu \tau}\sum_{n=1}^{n_* } \bigg[\sum_{k=1}^n  \int_{-\tau}^{0} \frac{ e^{-\nu (t_n-t_k-s)} ds}{(t_n-t_k-s)^{(\alpha^*+1)/2} } \|\nabla \hat \xi_k\|^2\bigg]\\[0.1in]
\ds \qquad \qquad \qquad \quad = Q e^{\nu \tau} \sum_{n=1}^{n_*} \bigg[\sum_{k=1}^n  \mathcal B_{n-k} \mathcal X_k\bigg]  =  Q e^{\nu \tau} \sum_{n=1}^{n_*} (\mathcal B * \mathcal X)_n\\[0.15in]
\ds \qquad \qquad \qquad \quad \le  Q e^{\nu \tau} \|\mathcal B * \mathcal X\|_{l_1} ,
\end{array}
\end{equation}
where $\mathcal B * \mathcal X$ represents the convolution of the sequences $\mathcal B : = \{ \mathcal B_n\}_{n=1}^N$ and $\mathcal X: = \{ \mathcal X_n\}_{n=1}^N$ defined as follows
\begin{equation}\label{ErrEq:S1:e7}\begin{array}{l}
\ds \mathcal B_n : = \int_{-\tau}^{0} \frac{ e^{-\nu (t_n-s)} ds}{(t_n-s)^{(\alpha^*+1)/2} }, \quad  \mathcal X_n : =  \|\nabla \hat \xi_n\|^2,  \quad
\ds \|\mathcal X\|_{l_1} : = \sum_{n=1}^N |\mathcal X_n|.
\end{array}
\end{equation}
We follow the discrete Young’s convolution formula \cite{AdaFou}, i.e.,  $\|\mathcal B * \mathcal X\|_{l_1} \le  \|\mathcal B\|_{l_1}  \|\mathcal X\|_{l_1}$, and
\begin{equation}\label{ErrEq:S1:B}\begin{array}{l}
\ds \|\mathcal  B\|_{l_1} =  \sum_{n=1}^N  \int_{-\tau}^0 \frac{e^{-\nu (t_n-s)} ds}{
(t_{n}-s)^{(\alpha^*+1)/2}} = \sum_{n=1}^N  \int_{t_{n}}^{t_{n+1}} e^{-\nu s} s^{-(\alpha^*+1)/2}ds\\[0.15in]
\ds \qquad \qquad \qquad \quad   \le \int_0^{\infty}  e^{-\nu s} s^{-(\alpha^*+1)/2}ds  \le
 Q \nu^{(\alpha^*-1)/2},
\end{array}
\end{equation}
by \eqref{Model:e5} to further bound the left-hand side term in  \eqref{ErrEq:S1:sum1} by
\begin{equation}\begin{array}{l}\label{ErrEq:Young}
  \ds \ds \sum_{n=1}^{n_* } \sum_{k=1}^n \big| \hat b_{n, k} \big|\|\nabla \hat \xi_k\|^2 \le    Q e^{\nu \tau} \|\mathcal B\|_{l_1}  \|\mathcal X\|_{l_1} \le    Q_1 e^{\nu \tau} \nu^{(\alpha^*-1)/2} \sum_{n=1}^{n_*} \|\nabla \hat \xi_n\|^2.
  \end{array}
\end{equation}
 We invoke \eqref{Model:e5} and \eqref{ErrEq:S1:bnk}  to bound the second term on the right hand side of \eqref{ErrEq:S1:e6} in a similar manner
\begin{equation}\label{ErrEq:S1:sum2}\begin{array}{l}
\hspace{-0.1in}\ds \sum_{n=1}^{n_* } \sum_{k=1}^n \big| \hat b_{n, k} \big|\|\nabla \hat \xi_n\|^2 = \sum_{n=1}^{n_* } \|\nabla \hat \xi_n\|^2 \sum_{k=1}^n \big| \hat b_{n, k} \big| \\[0.125in]
\ds \qquad \qquad \qquad \qquad  \le Q e^{\nu \tau} \sum_{n=1}^{n_* } \|\nabla \hat \xi_n\|^2 \sum_{k=1}^n  \int_{t_k-1}^{t_k} \frac{ e^{-\nu (t_n-s)} ds}{(t_n-s)^{(\alpha^*+1)/2} }\\[0.125in]
\ds \qquad \quad \qquad \qquad \quad    \le  Q_2 e^{\nu \tau} \nu^{(\alpha^*-1)/2}\sum_{n=1}^{n_*} \|\nabla \hat \xi_n\|^2.
\end{array}
\end{equation}
We incorporate the estimates \eqref{ErrEq:Young}--\eqref{ErrEq:S1:sum2} in \eqref{ErrEq:S1:e6} to find
\begin{equation}\label{ErrEq:S1:e8}\begin{array}{l}
\hspace{-0.175in} \ds \| \hat \xi_{n_*}\|^2  +2  \tau  \sum_{n=1}^{n_* } \|\nabla \hat \xi_n\|^2 \le  Q_3  e^{\nu \tau} \nu^{(\alpha^*-1)/2}\tau \sum_{n=1}^{n_*} \|\nabla \hat \xi_n\|^2 + 2 \tau \sum_{n=1}^{n_*}  \|G_n\| \|\hat \xi_n\|.
\end{array}
\end{equation}
We choose $\tau>0$ sufficiently small such that $Q_3 e^{\nu \tau} \nu^{(\alpha^*-1)/2} <2$ to cancel the like terms on both sides of \eqref{ErrEq:S1:e8} to conclude that for $1 \le n_* \le N$,
$$
 \ds \| \hat \xi_{n_*}\|^2  + Q  \tau  \sum_{n=1}^{n_* } \|\nabla \hat \xi_n\|^2 \le 2 \tau \sum_{n=1}^{n_*}  \|G_n\| \|\hat \xi_n\|.
$$
Drop the second term on the left-hand side of the above inequality to arrive at
\begin{equation}\label{ErrEq:S1:e9}
  \ds \| \hat \xi_{n_*}\|^2   \le  2 \tau \sum_{n=1}^{n_*}  \|G_n\| \|\hat \xi_n\|, \quad 1 \le n_* \le N.
\end{equation}
Let $\|\hat \xi_{m_*}\| := \max_{1 \le m \le N}  \|\hat \xi_m\|$ (assumed positive without loss of generality). Set $m = m_*$ in \eqref{ErrEq:S1:e9} and divide the resulting inequality by $\|\hat \xi_{m_*}\|$ and incorporate $\|\xi_n\| \le e^{\nu T}\|\hat\xi_n\|$ to arrive at \eqref{Xi:e1}.

\paragraph{Step 2: Estimates for $E$, $J$, $\hat J$ defined in \eqref{FEMdist}--\eqref{dis:err} and $\delta_\tau \eta$}

\begin{equation*}\begin{array}{rl}
\ds \| E\|_{\hat L^1(L^2)}  \ds \leq Q \sum_{n=1}^N\int_{t_{n-1}}^{t_n} \|\p_t^2 u\| (t-t_{n-1})dt\leq Q \tau \sum_{n=1}^N\int_{t_{n-1}}^{t_n}\|\p_t^2 u\| dt \leq QM \tau.
\end{array}\end{equation*}
We  follow a similar estimate as \eqref{ErrEq:S1:bnk} to bound $|\breve G_{n,k}(s)| \le Q|\ln(t_n-s)|$   and thus to bound $J$ by
\begin{equation*}\begin{array}{rl}
\ds \|J\|_{\hat L^1(L^2)} & \ds\leq Q \tau \sum_{n=1}^N \sum_{k=1}^n \int_{t_{k-1}}^{t_k} (t_n-s)^{-\alpha^*} \big|\ln(t_n-s)\big| \int_{t_{k-1}}^{t_k}\|\p_{\theta} u(\cdot,\theta)\|_{\check H^2} d\theta ds\\[0.15in]
&	\ds  =Q \tau \sum_{k=1}^N \int_{t_{k-1}}^{t_k}\|\p_{\theta} u(\cdot,\theta)\|_{\check H^2} d\theta \sum_{n=k}^N \int_{t_{k-1}}^{t_k} (t_n-s)^{-(\alpha^*+1)/2}  ds\\[0.15in]
&	 \ds\leq Q \tau \sum_{k=1}^N \int_{t_{k-1}}^{t_k} \|\p_{\theta} u(\cdot,\theta)\|_{\check H^2} d\theta\le Q \tau \|u\|_{W^{1,1}(\check H^2)}\leq QM \tau.
\end{array}\end{equation*}
We invoke \eqref{Lem0:e1} to split $\hat J_{n}$ in \eqref{dis:err} as
\begin{equation}\label{J}\begin{array}{rl}
\hspace{-0.175in} \ds \hat J_{n}
%= \sum_{k=1}^n \int_{t_{k-1}}^{t_k} \bigg [ \f{(t_n-s)^{-\alpha(t_n-s)}}{\Gamma(1-\alpha(t_n-s))}G(t_n-s)  - \f{(t_n-s)^{-\alpha(t_n-t_k)}}{\Gamma(1-\alpha(t_n-t_k))}   \breve G_{n,k}(s) \bigg ]\Delta u(\bm  x,s) ds\\[0.15in]
&\hspace{-0.125in}\ds = \sum_{k=1}^n \int_{t_{k-1}}^{t_k} \bigg [ \f{(t_n-s)^{-\alpha(t_n-s)}}{\Gamma(1-\alpha(t_n-s))}  - \f{(t_n-s)^{-\alpha(t_n-t_k)}}{\Gamma(1-\alpha(t_n-t_k))}    \bigg ]G(t_n-s) \Delta u(\bm  x,s) ds\\[0.15in]
\hspace{-0.175in} \ds  &\hspace{-0.125in} \ds  +  \sum_{k=1}^n \int_{t_{k-1}}^{t_k} \f{(t_n-s)^{-\alpha(t_n-t_k)}}{\Gamma(1-\alpha(t_n-t_k))}   \Big[G(t_n-s)-\breve G_{n,k}(s)\Big] \Delta u(\bm  x,s) ds =: \hat J_{n}^1 + \hat J_{n}^2.
\end{array}\end{equation}
We invoke the estimate blow \eqref{Lem0:e2} to bound the first term on the right-hand side by
\begin{equation*}\begin{array}{l}\label{J1}
\ds \big\|\hat J^1\big\|_{\hat L^1(L^2)}\le Q \tau \sum_{n=1}^N \|u\|_{L^\infty(\check H^2)}   \sum_{k=1}^n \int_{t_{k-1}}^{t_k} \bigg|\f{(t_n-s)^{-\alpha(t_n-s)}}{\Gamma(1-\alpha(t_n-s))}    - \f{(t_n-s)^{-\alpha(t_n-t_k)}}{\Gamma(1-\alpha(t_n-t_k))}\bigg| \\[0.175in]
\ds \qquad \qquad \qquad  \qquad \qquad \qquad  \qquad \qquad \qquad \qquad \times |\ln(t_n-s)| ds \\[0.1in]
\ds \qquad  \le  \ds Q M  \sum_{k=1}^n\int_{t_{k-1}}^{t_k} \big|\ln(t_n-s)\big| \int_{s}^{t_k} \bigg|\p_z\bigg(\f{(t_n-s)^{-\alpha(t_n-z)}}{\Gamma(1-\alpha(t_n-z))}\bigg)\bigg| dz  ds\\[0.15in]
\ds \qquad \le \ds Q M  \tau  \sum_{k=1}^n \int_{t_{k-1}}^{t_k} (t_n-s)^{-(\alpha^*+1)/2}ds \le QM \tau.
\end{array}
\end{equation*}
We then combine \eqref{Lem0:e1}, \eqref{dis:frac} with the estimates \eqref{Lem0:e3}--\eqref{Ptg} to bound
\begin{equation*}\begin{array}{l}\label{J2}
\ds \big\|\hat J^2\big\|_{\hat L^1(L^2)}\le Q \tau \sum_{n=1}^N \|u\|_{L^\infty(\check H^2)} \sum_{k=1}^n  \int_{t_{k-1}}^{t_k} (t_n-s)^{-\alpha^*}   \big|G(t_n-s)-\breve G_{n,k}(s)\big|  ds\\[0.15in]
\ds \le Q M \sum_{k=1}^n  \int_{t_{k-1}}^{t_k} (t_n-s)^{-\alpha^*}  \bigg[\big|\ln(t_n-s)\big| \int_{s}^{t_k}  \big| \p_z \big(\alpha^{\prime}(t_n-z) \big)\big|dz   \\[0.15in]
\ds \qquad \qquad  + \int_s^{t_k} \bigg|\p_z \bigg(-\f{\alpha(t_n-z)}{t_n-z} + \f{\Gamma^\prime(1-\alpha(t_n-z)) \alpha^{\prime}(t_n-z)}{\Gamma(1-\alpha(t_n-z))}\bigg)\bigg|dz \bigg] ds\\[0.15in]
\ds \le Q M \tau \sum_{k=1}^n   \int_{t_{k-1}}^{t_k} (t_n-s)^{-(\alpha^*+1)/2}ds  \le QM \tau.
\end{array}
\end{equation*}
We invoke the above two estimates in \eqref{J} to bound $\big\|\hat J\big\|_{\hat L^1(L^2)}\le QM \tau$.
We then use \eqref{Ritz:e2} to bound  $\delta_{\tau} \eta$ by
\begin{equation*}\begin{array}{l}
\ds \big \| \delta_{\tau} \eta \big \|_{\hat L^1(L^2)} = \sum_{n=1}^N \bigg \| \int_{t_{n-1}}^{t_n} (I-\Pi_h)\p_t u dt \bigg \| \leq Qh^2 \| u\|_{W^{1,1}(H^2)} \leq QM h^2.
\end{array}\end{equation*}
We incorporate these local error estimates into \eqref{Xi:e1}  to obtain
\begin{equation*}\label{Err:e1}\begin{array}{l}
\hspace{-0.15in} \ds \| \xi \|_{\hat L^\infty(L^2)}  \le Q M \big ( \|E\|_{\hat L^1(L^2)} +\|J\|_{\hat L^1(L^2)}+\| \hat J\|_{\hat L^1(L^2)} + \| \delta_{\tau} \eta \|_{\hat L^1(L^2)}  \big)   \le Q M (\tau + h^2),
\end{array}\end{equation*}
which, together  with \eqref{Ritz:e2}, yields \eqref{thm:Err1}.
\end{proof}

\section{Numerical simulation}\label{S:Experiment}
In this section, we shall perform numerical examples to validate the analysis of the fully discrete scheme. We consider a concrete problem \eqref{VtFDEs1} in one-space dimension with $\Omega=(0,1)$ and $T=1$. The uniform mesh size $h=1/M$ for some positive integer $M$.  To illustrate the convergence of the proposed method, we denote the temporal error and the convergence rate as
\begin{equation*}
 E_{2}(\tau,h) = \sqrt{ h\sum_{j=1}^{M-1} \left|U^j_N(\tau,h)-U^j_{2N}(\tau/2,h)\right|^2 }, \quad {\rm rate}^t = \log_{2} \left(\frac{E_{2}(2\tau,h)}{E_{2}(\tau,h)}\right),
\end{equation*}
and the spatial error and the convergence rate as
\begin{equation*}
 G_{2}(\tau,h) = \sqrt{ h\sum_{j=1}^{M-1}  \left|U^j_N(\tau,h)-U^{2j}_{N}(\tau,h/2)\right|^2}, \quad {\rm rate}^x = \log_{2} \left(\frac{G_{2}(\tau,2h)}{G_{2}(\tau,h)}\right).
\end{equation*}

\textbf{Example 1.} We take $\alpha(t)=1-e^{-t}$, $f\equiv 0$ and $u_0(x)=\sin (\pi x)$. We fix $M=32$ to test the temporal convergence rates  and fix $N=64$ to test the spatial convergence rates. We present the numerical results in Table \ref{tab:1}, which illustrates the first-order accuracy in time and the second-order accuracy in space of the scheme \eqref{FEM}, as proved in  Theorem \ref{thm:Error}.

\begin{table}\small\centering
     	\caption{The errors and convergence rates in Example 1.}
     \label{tab:1}  % Give a unique label
     {\footnotesize\begin{tabular}{cccccccccccccccccccc}
      \hline\noalign{\smallskip}
           $N$  &  && $E_{2}(\tau,h)$      &  && ${\rm rate}^t$ & $M$  &  && $G_{2}(\tau,h)$      &  && ${\rm rate}^x$ \\
     \noalign{\smallskip}\hline\noalign{\smallskip}
		  128  & && 1.7768e-4         &  &&     *   &8    & && 1.4650e-3         &  &&      *   \\
		  256  & && 9.9362e-5         &  && 0.8385  &16   & && 3.7606e-4         &  && 1.9619   \\
		  512  & && 5.3033e-5         &  && 0.9058   &32   & && 9.4602e-5         &  && 1.9910  \\
         1024  & && 2.7560e-5         &  && 0.9443   & 64   & && 2.3687e-5         &  && 1.9978     \\
         2048  & && 1.4098e-5         &  && 0.9671 &128  & && 5.9240e-6         &  && 1.9994   \\
     \noalign{\smallskip}\hline
     \end{tabular}}
   \end{table}

%\begin{table}[h]\small\centering
%     	\caption{The errors and space convergence rates when $N=64$.}
%     \label{tab:2}  % Give a unique label
%     {\footnotesize\begin{tabular}{cccccccccc}
%      \hline\noalign{\smallskip}
%           $M$  &  && $G_{2}(\tau,h)$      &  && ${\rm rate}^x$    \\
%     \noalign{\smallskip}\hline\noalign{\smallskip}
%          8    & && 1.4650e-3         &  &&      *      \\
%		  16   & && 3.7606e-4         &  && 1.9619      \\
%		  32   & && 9.4602e-5         &  && 1.9910      \\
%		  64   & && 2.3687e-5         &  && 1.9978       \\
%          128  & && 5.9240e-6         &  && 1.9994      \\
%     \noalign{\smallskip}\hline
%     \end{tabular}}
%        \vspace{-0.1in}
%   \end{table}

\textbf{Example 2.} We take $\alpha(t)=\sin(t)$, $f\equiv 0$ and $u_0(x)=x^2(1-x)^2$. The other parameters are chosen as those in Example 1 and numerical results are presented in Table \ref{tab:3}, which leads to the same conclusion as above.
%In addition, we show intuitively the numerical solutions in Figure \ref{fig:1} by fixing $M=1024$ and $N=32$.
\begin{table}[]\small\centering
     	\caption{The errors and convergence rates in Example 2.}
     \label{tab:3}  % Give a unique label
     {\footnotesize\begin{tabular}{cccccccccccccccccccc}
      \hline\noalign{\smallskip}
           $N$  &  && $E_{2}(\tau,h)$      &  && ${\rm rate}^t$ & $M$  &  && $G_{2}(\tau,h)$      &  && ${\rm rate}^x$  \\
     \noalign{\smallskip}\hline\noalign{\smallskip}
		  128  & && 2.1888e-5         &  &&     * &16   & && 2.7669e-5         &  && *     \\
		  256  & && 1.2108e-5         &  && 0.8542 &32   & && 7.2184e-6         &  && 1.9385   \\
		  512  & && 6.4090e-6         &  && 0.9177  &64   & && 1.8234e-6         &  && 1.9851   \\
         1024  & && 3.3111e-6         &  && 0.9528 &128  & && 4.5702e-7         &  && 1.9963      \\
         2048  & && 1.6871e-6         &  && 0.9728  & 256  & && 1.1433e-7         &  && 1.9991 \\
     \noalign{\smallskip}\hline
     \end{tabular} }
   \end{table}

%\begin{table}[h]\small\centering
%     	\caption{The errors and space convergence rates when $N=64$.}
%     \label{tab:4}  % Give a unique label
%     {\footnotesize\begin{tabular}{cccccccccc}
%      \hline\noalign{\smallskip}
%           $M$  &  && $G_{2}(\tau,h)$      &  && ${\rm rate}^x$    \\
%     \noalign{\smallskip}\hline\noalign{\smallskip}
%		  16   & && 2.7669e-5         &  && *      \\
%		  32   & && 7.2184e-6         &  && 1.9385      \\
%		  64   & && 1.8234e-6         &  && 1.9851       \\
%          128  & && 4.5702e-7         &  && 1.9963      \\
%          256  & && 1.1433e-7         &  && 1.9991      \\
%     \noalign{\smallskip}\hline
%     \end{tabular}}
%
%   \end{table}

 %   \begin{figure}
%    \centering
%       \includegraphics[width=0.65\textwidth]{Fig2.eps}
%    \caption {Numerical solutions for the fully discrete scheme when $\alpha(t)=\sin(t)$, $M=1024$ and $N=32$.}\label{fig:2}
%    \end{figure}

\section*{Appendix: Proof of Lemma \ref{lem:Append}}
Since $B(t) u \big|_{t=0} = g( t) * \Delta u\big|_{t=0} = 0$, we commute the operators ${}_0I_t^{1-\varepsilon}$ and $\p_t$ and apply integration by parts  on the right-hand side of the fifth equal sign to obtain
\begin{equation}\label{Append:e1}\begin{array}{l}
{}^R\p_t^\varepsilon \p_t \big(B(t) u\big) = \p_t\, I_t^{1-\varepsilon}\partial_t \big(B(t)u\big)
= \p_t^2\, {}_0I_t^{1-\varepsilon}\big(B(t)u\big) \\[0.05in]
\qquad \ds =\partial_t^2 \!\int_0^t\frac{(t-s)^{-\varepsilon}}{\Gamma(1-\varepsilon)}\int_0^s g( s-y) \Delta u(\bm x,y)  dyds\\[0.15in]
\qquad \ds =\p_t^2 \! \int_0^t \f{\Delta u(\bm x,y)}{\Gamma(1-\varepsilon)} \! \int_y^t (t-s)^{-\varepsilon}g( s-y) ds dy\\[0.15in]
\qquad \ds =\p_t^2\bigg[-\f{\Delta u(\bm x,y)}{\Gamma(1-\varepsilon)} \int_y^t \int_\theta^t (t-s)^{-\varepsilon}g( s-\theta)ds d\theta\bigg|_{y=0}^{y=t}\\[0.15in]
\qquad \ds\quad+\int_0^t \f{\p_y \Delta u(\bm x,y)}{\Gamma(1-\varepsilon)} \int_y^t \int_\theta^t (t-s)^{-\varepsilon}g( s-\theta)dsd\theta dy\bigg] \\[0.15in]
\ds \qquad =\int_0^t \f{\p_y \Delta u(\bm x,y)}{\Gamma(1-\varepsilon)}\, \partial_t^2 \! \int_y^t \int_\theta^t (t-s)^{-\varepsilon}g( s-\theta) dsd\theta  dy\\[0.15in]
\ds \qquad \quad+\f{\Delta u(\bm x,0)}{\Gamma(1-\varepsilon)}\, \partial_t^2 \! \int_0^t \int_\theta^t (t-s)^{-\varepsilon}g( s-\theta)ds d\theta.
\end{array}
\end{equation}
We first prove Case 1. For the convenience of analysis, we rewrite $g( t) =\f{t^{-\alpha( t)}}{\Gamma(1-\alpha( t))} G( t)$ as in \eqref{Lem0:e1}   and the preceding estimates  \eqref{Lem0:e2}--\eqref{Ptg} imply that
 \begin{equation}\label{G:est}
   \ds |G( t)| \le Q |\ln(t)|,\quad | G^\prime( t)| \le Q t^{-1}.
 \end{equation}

To bound \eqref{Append:e1}, it remains to bound the following term for $0\leq y< t$
\begin{equation}\label{Append:e2}\begin{array}{l}
\ds \p_t^2\int_y^t \int_\theta^t (t-s)^{-\varepsilon}g( s-\theta) dsd\theta   \\[0.15in]
\ds = \p_t^2\int_y^t \int_\theta^t (t-s)^{-\varepsilon}\f{(s-\theta)^{-\alpha( s-\theta)}}{\Gamma(1-\alpha( s-\theta))}G( s-\theta) dsd\theta  \\[0.15in]
\ds=\p_t^2 \int_y^t(t-s)^{-\varepsilon} \int_y^s \frac{(s-\theta)^{-\alpha(s-\theta)} }{\Gamma(1-\alpha(s-\theta))} G( s-\theta)d\theta ds\\[0.15in]
\ds=-\p_t^2 \! \int_y^t(t-s)^{-\varepsilon} \bigg [\int_y^s \frac{(s-\theta)^{\alpha(0)-\alpha(s-\theta)}}{\Gamma(1-\alpha( s-\theta))} G( s-\theta) d\frac{(s-\theta)^{1-\alpha( 0)}}{1-\alpha( 0)}\bigg ] ds\\[0.15in]
\ds=\p_t^2 \! \int_y^t (t-s)^{-\varepsilon}\bigg[\frac{(s-y)^{1-\alpha(s-y)}}{\Gamma(1-\alpha(s-y))} G( s-y) \\[0.15in]
\ds~+ \int_y^s (s-\theta) \p_\theta\Big(\frac{(s-\theta)^{-\alpha(s-\theta)}}{\Gamma(1-\alpha(s-\theta))}G( s-\theta)\Big) d\theta\bigg] ds=:\p_t^2 I_1 + \p_t^2 I_2,
\end{array}\end{equation}
where we have used the first estimate in \eqref{Model:e5} to evaluate the following limit
$$
\begin{array}{l} \ds \lim_{\theta \rightarrow s^-} \bigg|\frac{(s-\theta)^{1-\alpha(s-\theta)}}{\Gamma(1-\alpha( s-\theta))} G( s-\theta) \bigg| \le \lim_{\theta \rightarrow s^-}   Q (s-\theta)^{1-\alpha(s-\theta)} |\ln(s-\theta)|\\[0.15in]
\ds \qquad \qquad  =  \lim_{\theta \rightarrow s^-}   Q (s-\theta)|\ln(s-\theta)| =0.
\end{array}
$$
Direct calculations show that $I_2$ in \eqref{Append:e2} can be expressed as
\begin{equation}\label{Append:e3}\begin{array}{rl}
I_2 & \ds =\int_y^t {(t-s)^{-\varepsilon}}\int_y^s (s-\theta)^{1-\alpha(s-\theta)} \mathcal K(\theta,s)d\theta ds
\end{array}
\end{equation}
with
\begin{equation}\label{Append:K}\begin{array}{rl}
\hspace{-0.1in}\ds \mathcal K(\theta,s) & \hspace{-0.1in}\ds := \frac{1}{\Gamma(1-\alpha(s-\theta))} \bigg[G( s-\theta)\Big(-\p_\theta\alpha(s-\theta)\ln(s-\theta)\\[0.15in]
& \ds +\frac{\alpha(s-\theta)}{s-\theta}+\frac{\Gamma'(1-\alpha(s-\theta))\p_\theta\alpha(s-\theta)}{\Gamma(1-\alpha(s-\theta))}\Big) + \p_\theta G( s-\theta)\bigg].
\end{array}\end{equation}
We combine the estimates in  \eqref{G:est} with the first estimate in \eqref{Model:e5} to bound the integrand of the inner integral in \eqref{Append:e3} by
\begin{equation*}\begin{array}{l}
\ds \big|(s-\theta)^{1-\alpha(s-\theta)} \mathcal K(\theta,s)\big| \le Q (s-\theta)^{1-\alpha(s-\theta)} \big( \ln^2(s-\theta) + (s-\theta)^{-1}\big)\\[0.1in]
\ds \qquad   \qquad  \qquad \qquad \qquad  \qquad\le Q (s-\theta)^{-\alpha(s-\theta)} \le Q.
  \end{array}
\end{equation*}
 In addition, the estimate \eqref{Model:e5} and similar derivations as \eqref{Lem0:e1}-\eqref{Lem0:e2} combined with \eqref{Append:K} imply that
\begin{equation}\label{Append:K:est}
  \Big|\p_s\big[(s-\theta)^{1-\alpha(s-\theta)} \mathcal K(\theta,s)\big]\Big| \le Q (s-\theta)^{-\alpha(s-\theta)} \ln^2(s-\theta) \le Q(s-\theta)^{-\varepsilon}.
\end{equation}
We then invoke the above two estimates to  apply integration by parts and incorporate the preceding limit in \eqref{Lop:e1} to get
\begin{equation}\label{Append:e4}\begin{array}{rl}
\ds  \hspace{-0.15in}I_2 & \hspace{-0.125in}\ds = \int_y^t\frac{(t-s)^{1-\varepsilon}}{1-\varepsilon}\p_s\int_y^s (s-\theta)^{1-\alpha(s-\theta)} \mathcal K(\theta,s)d\theta ds \\[0.15in]
 \hspace{-0.125in}&\hspace{-0.125in} \ds =Q \int_y^t\f{(t-s)^{1-\varepsilon}}{1-\varepsilon}ds + \int_y^t\frac{(t-s)^{1-\varepsilon}}{1-\varepsilon}\int_y^s \p_s \big [ (s-\theta)^{1-\alpha(s-\theta)} \mathcal K(\theta,s)\big]d\theta ds\\[0.15in]
 \hspace{-0.125in} &\hspace{-0.125in}\ds = Q \f{(t-y)^{2-\varepsilon}}{(1-\varepsilon)(2-\varepsilon)} \!+\! \int_y^t\frac{(t-s)^{2-\varepsilon}}{(1-\varepsilon)(2-\varepsilon)}\p_s\int_y^s\p_s\big[ (s-\theta)^{1-\alpha(s-\theta)}\mathcal K(\theta,s)\big]d\theta ds.
\end{array}
\end{equation}
Differentiate \eqref{Append:e4} twice  and incorporate the estimate \eqref{Append:K:est} to bound
\begin{equation}\label{Append:e5}\begin{array}{rl}
\hspace{-0.1in}\ds | \p_t^2 I_2 | &\hspace{-0.1in}\ds \le Q (t-y)^{-\varepsilon} + \bigg|\int_y^t (t-s)^{-\varepsilon}\p_s\int_y^s\p_s\big[ (s-\theta)^{1-\alpha(\theta,s-\theta)}\mathcal K(\theta,s)\big]d\theta ds\bigg|\\[0.125in]
&\hspace{-0.1in} \ds\leq Q  (t-y)^{-\varepsilon} + Q\int_y^t(t-s)^{-\varepsilon}(s-y)^{-\varepsilon}ds\leq Q(t-y)^{-\varepsilon}.
\end{array}\end{equation}
To bound $I_1$ in \eqref{Append:e2}, we utilize the relation
$$(s-y)^{1-\alpha( s-y)} = \int_y^s \p_z\big[(s-y)^{1-\alpha( z-y)}\big] dz + (s-y)^{1-\alpha( 0)}$$
to rewrite $I_1$ as
\begin{equation*}\begin{array}{l}
\ds I_1 = \int_y^t \frac{(t-s)^{-\varepsilon}(s-y)^{1-\alpha(0)}}{\Gamma(1-\alpha(s-y))} G( s-y)ds\\[0.15in]
\ds\quad\quad+\int_y^t (t-s)^{-\varepsilon}\int_y^s\p_z\big[(s-y)^{1-\alpha( z-y)}\big] G( s-y) dzds=:I_{1a}+I_{1b},
\end{array}\end{equation*}
where $\p_t^2 I_{1b}$ can be bounded in a similar manner to that of $\p_t^2 I_2$.
We employ the variable substitution introduced in \eqref{Lem1vo:sub} to reformulate  $ I_{1a}$  and then employ the first estimate in \eqref{G:est} to bound the dominant term of $\p_t^2 I_{1a}$ as
\begin{equation}\label{I1a}\begin{array}{l}
\ds | \p_t^2 I_{1a}| = \bigg|\p_t ^2 \bigg[ (t-y)^{2-\varepsilon -\alpha( 0)} \int_0^1 \f{(1-z)^{-\varepsilon} z^{1-\alpha( 0)} G( (t-y)z)}{\Gamma(1-\alpha( (t-y)z))} dz\bigg]\bigg|\\[0.15in]
%\ds \qquad \qquad  \le  Q (t-y)^{-\varepsilon } \int_0^1 (1-z)^{-\varepsilon} z | G( (t-y)z)| dz \\[0.15in]
\ds  \qquad \qquad  \le  Q (t-y)^{-\varepsilon }\int_0^1 (1-z)^{-\varepsilon} z  \big|\ln[(t-y)z]\big| dz\\[0.15in]
\ds  \qquad \qquad  \le  Q (t-y)^{-\varepsilon } |\ln(t-y)|\int_0^1 (1-z)^{-\varepsilon} z^{1-\varepsilon} dz\\[0.15in]
\ds  \qquad \qquad  \le  Q (t-y)^{-\varepsilon } |\ln(t-y)|B(1-\varepsilon,2-\varepsilon) \le Q (t-y)^{-\varepsilon } |\ln(t-y)|.
\end{array}
\end{equation}

 We incorporate \eqref{Append:e5} and  the estimates for $\p_t^2 I_1$ into \eqref{Append:e2},  and combine the resulting estimate with \eqref{Append:e1} and the fact that $\Delta u(\bm x, 0) = \Delta u_0$  to prove \eqref{lem:Append:e1} for $0 < \varepsilon \ll 1$. The estimate \eqref{lem:Append:e1} with $\varepsilon = 0$ can be obtained by letting $\varepsilon \downarrow 0$ in \eqref{lem:Append:e1}.

 We note that for Case 2, the estimate \eqref{I1a}, which is the most singular estimate among all the estimates for $\p_t^2 I$,  is augmented to $| \p_t^2 I_{1a}| \le Q (t-y)^{-\varepsilon }$ since $|G( t)| \le Q $ by \eqref{Lem0:e1}-\eqref{Lem0:e2} and  \eqref{Lem0:lop1}. We incorporate  \eqref{Append:e5} and  the modified estimates for $\p_t^2 I_1$ into \eqref{Append:e2},  and combine the resulting estimate with \eqref{Append:e1} to prove \eqref{lem:Append:e2} for $0 < \varepsilon \ll 1$. We follow the preceding procedures to prove the estimate \eqref{lem:Append:e2} with $\varepsilon = 0$.

\section*{Acknowledgments}
This work was partially supported by the National Natural Science Foundation of China (No. 12301555), the National Key R\&D Program of China (No. 2023YFA1008903), and the Taishan Scholars Program of Shandong Province (No. tsqn202306083).

\end{document}